\newcommand{\TheTitle}{A stencil scaling approach for
accelerating matrix-free finite element implementations}
\newcommand{\ShortTitle}{Stencil scaling for finite elements}
\newcommand{\TheAuthors}{S. Bauer, D. Drzisga, M. Mohr, U. R\"ude, C. Waluga,
and B. Wohlmuth}
\newcommand{\changed}[1]{#1}
\headers{\ShortTitle}{\TheAuthors}
\title{\TheTitle%
\thanks{Submitted to the editors \today.
\funding{This work was partly supported  by the German Research Foundation
through the Priority Programme 1648 "Software for Exascale Computing"
(SPP\-EXA) and by WO671/11-1.}}}
\author{
S. Bauer\thanks{Dept.~of Earth and Environmental Sciences,
Ludwig-Maximilians-Universit{\"a}t M{\"u}nchen} \and
D. Drzisga\thanks{Institute for Numerical Mathematics (M2), Technische
Universit{\"a}t M{\"u}nchen} \and
M. Mohr\footnotemark[2] \and
U. R\"ude\thanks{Dept.~of Computer Science 10,
Friedrich-Alexander-Universit{\"a}t Erlangen-N{\"u}rnberg} \and
C. Waluga\footnotemark[3] \and
B. Wohlmuth\footnotemark[3]}
\date{\today}
\definecolor{Apricot}{RGB}{251,185,130}
\definecolor{SkyBlue}{RGB}{70,197,221}
\definecolor{YellowOrange}{RGB}{250,162,26}
\definecolor{ForestGreen}{RGB}{0,155,85}
\definecolor{RoyalBlue}{RGB}{0,113,188}
\newcommand{\RR}{\mathbb{R}}
\renewcommand{\div}{\mathop{\rm div}}
\newcommand{\stencildir}{w}
\newcommand{\integrald}[4]{\int_{#1}^{#2} #3 \,\mathrm{d}#4}
\newcommand{\xh}{\hat{x}}
\newcommand{\yh}{\hat{y}}
\newcommand{\zh}{\hat{z}}
\begin{document}
\maketitle
\begin{abstract}
We present a novel approach to fast on-the-fly low order finite element assembly for scalar elliptic partial differential equations of Darcy type with variable coefficients optimized for matrix-free implementations.
Our approach introduces a new operator that is obtained by appropriately scaling the reference stiffness matrix from the constant coefficient case.
Assuming sufficient regularity, an a priori analysis shows 
that solutions obtained by this approach are unique and have
asymptotically optimal order convergence in the $H^1$- and the $L^2$-norm
on hierarchical hybrid grids.
For the pre-asymptotic regime, we present a local modification that guarantees uniform ellipticity of the operator.
Cost considerations show that our novel approach requires roughly one third of the floating-point operations compared to a classical finite element assembly scheme employing nodal integration.
Our theoretical considerations are illustrated by numerical tests 
that confirm the expectations with respect to accuracy and run-time.
A large scale application with more than a 
hundred billion ($1.6\cdot10^{11}$) 
degrees of freedom executed on 14\,310 compute cores
demonstrates the efficiency of the new scaling approach.
\end{abstract}
%
%
\begin{keywords}
matrix-free, finite-elements, variable coefficients, stencil scaling, variational crime analysis, optimal order a priori estimates
\end{keywords}

\begin{AMS}
65N15, 65N30, 65Y20
\end{AMS}
\section{Introduction}
\label{sec:intro}

Traditional finite element implementations are based on 
computing local element stiffness matrices, followed by a local-to-global assembly step,
resulting in a sparse matrix. However, 
the cost of storing the global stiffness matrix is significant. 
Even for scalar equations and
low order 3D tetrahedral elements, the stiffness matrix has, on average, fifteen entries per
row, and thus a standard sparse matrix format will require thirty times as much
storage for the matrix as for the solution vector.
This 
limits the size of the problems that can be tackled
and becomes the dominating cost factor since the sparse matrix must
be re-read from memory repeatedly when iterative solvers are applied.
On all current and future computing systems memory throughput and
memory access latency 
can determine the run-time more critically than the floating-point operations executed.
Furthermore, 
energy consumption has been identified as one of
the fundamental roadblocks in exa-scale computing.
In this cost metric, memory access is again 
more expensive than computation. 
Against the backdrop of this technological development, it has
become mandatory to develop numerical techniques that 
reduce memory traffic.
In the context of partial differential equations this is leading to a re-newed
interest in so-called {\em matrix-free} techniques 
and -- in some sense -- to a revival of techniques that are well-known in the
context of finite difference methods.

Matrix-free techniques are motivated 
from the observation that 
many iterative
solvers, e.g., Richardson iteration or Krylov subspace methods, require
only the result of multiplying the global system matrix with a vector, 
but not the matrix itself. 
The former can be computed by 
local operations in each element, 
avoiding to set up, store, and load the global stiffness matrix.
One of the first papers in this direction is \cite{Carey:1986:CANM}, which
describes the so called element-by-element approach (EBE), in which the global
matrix-vector-product (MVP) is assembled from the contributions of MVPs of
element matrices with local vectors of unknowns. The element matrices are
either pre-computed and stored or recomputed on-the-fly.
Note that storing all element matrices, even for low-order elements, has a
higher memory footprint than the global matrix\footnote{However, it requires
less memory than storage schemes for sparse direct solvers which reserve space
for fill-in, the original competing scenario in \cite{Carey:1986:CANM}.} itself.

Consequently, the traditional EBE has not found wide application in unstructured
mesh approaches. However, it has been 
successfully applied in cases where
the discretization is based on undeformed hexahedral elements with tri-linear
trial functions, see e.g.~\cite{Arbenz:2008:IJMME,Bielak:2005:CMES,Flaig:2012:LSSC,Rietbergen:1996:IJMME}.
In such a setting, the element matrix is the same for all elements, which
significantly reduces the storage requirements, and variable material parameters
can be introduced by weighting local matrix entries in the same on-the-fly
fashion as will be developed in this paper.


Matrix-free approaches for higher-order elements, as described in
e.g.,~\cite{Brown:2010:JSC,Kronbichler:2012:CAF,Ljungkvist:2017:HPC17,Ljungkvist:2017:TechRep,May:2015:CMAME},
differ from the classic EBE approach in that they do not setup the element
matrix and 
consecutively multiply it with the local vector.
Instead, they fuse the two steps
by going back to numerical integration of the weak form itself. The process is
accelerated by pre-computing and storing certain quantities, such as
e.g., derivatives of basis functions at quadrature points within a reference
element. These techniques \changed{in principle} work for arbitrarily shaped elements and orders,
although a significant reduction of complexity can be achieved for tensor-product elements.

However, these matrix-free approaches have also shortcomings.
While the low-order settings
\cite{Arbenz:2008:IJMME,Bielak:2005:CMES,Flaig:2012:LSSC,Rietbergen:1996:IJMME}
require structured hexahedral meshes, modern techniques for unstructured meshes
only pay off for elements \changed{with tensor-product spaces} with polynomial orders of at least two; see
\cite{Kronbichler:2012:CAF,Ljungkvist:2017:HPC17}.

In this paper we will present a novel matrix-free approach for low-order
finite elements designed for the hierarchical hybrid grids framework
(HHG); see e.g.~\cite{Bergen:2006:SCS,bergen-huelsemann_2004,bergen2007hierarchical}.
HHG offers significantly more geometric flexibility than undeformed
hexahedral meshes. It is based on two interleaved ideas.
The first one is a special discretization
of the problem domain. In HHG, the computational grid is created by way of a
uniform refinement following
the rules of \cite{Bey95}, starting from a possibly unstructured simplicial
macro mesh.
\changed{The resulting nested hierarchy of meshes allows for the
implementation of powerful geometric multigrid solvers.}
The elements of \changed{the} macro mesh are called macro-elements and
the resulting sub-triangulation reflects a uniform structure within these macro-elements. 
The second aspect is based on the fact that each row of the global finite
element stiffness matrix can be considered as a difference stencil. This
notion and point of view is classical on structured grids and recently has
found re-newed interest in the context of finite elements too; see
e.g.,~\cite{Engwer:2017:CCPE}. In combination with the HHG grid construction
this implies that for linear simplicial elements one obtains stencils with
identical structure for each inner node of a macro primitive. \changed{We define macro primitives as the geometrical entities of the macro mesh of different dimensions, i.e.,}~vertex,
edge, face, and \changed{tetrahedrons}. If additionally the coefficients of the PDE are constant
per macro-element, then also the stencil entries are exactly the same.
Consequently, only a few different stencils (one per \changed{macro} primitive)
can occur and need to be stored. This leads to extremely efficient matrix-free
techniques, as has been demonstrated e.g.,~in
\cite{Bergen:2006:SCS,gmeiner2015towards}.

Let us now consider the setting of an elliptic PDE with piecewise smooth
variable coefficients, 
assuming that the macro mesh resolves 
jumps in the coefficients. 
In this case, a standard finite element formulation is based on 
quadrature formulas and introduces a variational crime. According to
\cite{CIA76,SF08}, there is flexibility 
how the integrals are approximated without degenerating the order of convergence. 
This has recently been exploited in \cite{Bauer:2017:Two-Scale} with a method that
approximates these integral values on-the-fly using \changed{suitable} surrogate
polynomials with respect to the macro mesh. The resulting  two-scale method
is able to preserve the convergence order if the coarse and the fine scale are related properly. Here we propose an alternative which is based on the fine scale.

For this article, we restrict ourselves to
the lowest order case of conforming finite elements on simplicial meshes. Then
the most popular quadrature formula is the one point Gauss rule which in the
simplest case of $\div (k \nabla u) $ as PDE operator just weights the
element based reference stiffness matrix of the Laplacian by the
factor of $k(x_T)$ where $x_T$ is the barycenter of the element $T$.
Alternatively, one can select a purely vertex-based quadrature formula. 
Here, the weighting of the element matrix is given by $\sum_{i=1}^{d+1} k(x_T^i)
/(d+1)$, where $d$ is the space dimension and $x_T^i$ are the vertices
of element $T$.
Using a vertex-based quadrature formula saves function evaluations and is,
thus, attractive whenever the evaluation of the coefficient function is
expensive and it pays off 
to reuse once computed values in several element stiffness matrices.
Note that reusing barycentric data
on general unstructured meshes
will require nontrivial storage schemes.

In the case of 
variable coefficient functions, stencil entries can vary from
one mesh node to another. The number of possibly different stencils within
each macro-element becomes $\frac{1}{d!}2^{d\ell} + \mathcal{O}(2^{(d-1)\ell})$,
where $\ell $ is the number of uniform refinement steps for HHG. Now we can
resort to two options: 
Either these stencils are computed once and then saved, effectively creating a
sparse matrix data structure, or they are computed on-the-fly each time when
they are needed. Neither of these techniques is ideal for extreme scale
computations.
While for the first option ${\mathcal O }(2^{d\ell})$ 
extra memory is consumed and extensive memory traffic occurs, the second option
requires re-computation of ${\mathcal O } (2^{d\ell}) $ local contributions.

The efficiency of a numerical PDE solver can be analyzed following
the {\em textbook paradigm}~\cite{brandt1998barriers}
that defines a work unit (WU) to be the cost of one application
of the discrete operator for a given problem.
With this definition, the analysis of iterative solvers can be conducted in terms of WU.
Classical multigrid textbook efficiency is achieved when 
the solution is obtained in less than 10 WU.
For devising an efficient method it is, however, equally critical to
design algorithms that reduce the cost of a WU without sacrificing accuracy.
Clearly, the real life cost of a WU depends on the computer hardware
and the efficiency of the implementation, 
as e.g.,~analyzed for parallel supercomputers in \cite{gmeiner2015towards}.
On the other side, matrix-free techniques, as the one proposed in this article,
seek opportunities to reduce
the cost of a WU by a clever rearrangement of the algorithms
or by exploiting approximations where this is possible; see e.g.,~also \cite{Bauer:2017:Two-Scale}.

These preliminary considerations motivate our novel approach
to reduce the cost of a WU by recomputing the surrogate stencil entries
for a matrix-free solver more efficiently.
We find that these values can be assembled from a reference stencil
of the constant coefficient case which is scaled appropriately using nodal
values of the coefficient function. 
We will show that under suitable conditions, this technique does not sacrifice accuracy.
However, we also demonstrate that the new method 
can reduce the cost of a WU considerably and in consequence helps to reduce the
time-to-solution.

The rest of this paper is structured as follows: In section 2, we define our new scaling approach. The variational crime is analyzed in Section 3 where optimal order a priori results for the $L^2 $- and $H^1$-norm are obtained. In section 4, we consider modifications in the pre-asymptotic regime to guarantee uniform ellipticity. Section 5 is devoted to the reproduction property and the primitive concept which allows for a fast on-the-fly reassembling in a matrix free software framework. In section 6, we discuss the cost compared to a standard nodal based element-wise assembling. Finally, in section 7 we perform numerically an accuracy study and a run-time comparison to illustrate the performance gain of the new scaling approach. 

\section{Problem setting and definition of the scaling approach}
\label{sec:frame}
We consider a scalar elliptic  partial differential equation of Darcy type, i.e.,
\begin{equation*}
-\text{div } K \nabla u = f, \quad \text{in } \Omega , \quad \text{tr } u = 0
\quad \text{on } \partial \Omega
\end{equation*}
where $\text{tr} $ stands for the boundary trace operator and $f \in L^2(\Omega)$.
Here $\Omega \subset \mathbb{R}^d$, $d=2,3$, is a bounded polygonal/polyhedral
domain, and $K$ denotes a uniformly positive and symmetric tensor
\changed{with coefficients specified through a number of functions}
$k_{m}$, $m=1 ,\ldots , M$,
where $M \leq 3$ in 2D  and $M \leq 6$ in 3D due to symmetry.

For the Darcy operator with a scalar uniform positive permeability, i.e.,  $-\div (k \nabla u)$,
we can set $M=1$ and  $k_1
:=k$. 
The above setting also covers blending finite elements approaches~\cite{GH73}.
Here $K$ is related to the Jacobian
of the blending function. For example, if the standard Laplacian model problem is considered on the physical domain $\Omega_{\text{phy}}$ but the actual assembly is carried out on a reference domain $\Omega := \Phi( \Omega_{\text{phy}})$,  we have
\begin{align}
a(v,w) = \integrald{\Omega_{\text{phy}}}{}{\nabla v_{\text{phy}} \cdot \nabla w_{\text{phy}}}{x_{\text{phy}}} = \integrald{\Omega}{}{\nabla  v \cdot \frac{(D \Phi)(D \Phi)^{\top}}{ | \text{det } D \Phi |} \nabla w}{x},
\label{eqn:aWithBlendingFunc}
\end{align} 
where $D\Phi $ is the Jacobian of the mapping $\Phi$, and $ v_{\text{phy}}  := v\circ \Phi $, $ w_{\text{phy}}  := w\circ \Phi $.

\subsection{Definition of our scaling approach}
\label{subsec:stencilScaling}
The weak form associated with the partial differential equation is defined in terms of
the bilinear form $a(v,w) := \integrald{\Omega}{}{\nabla v \cdot K \nabla w}{x}$, and the weak solution $u \in V_0 := H_0^1(\Omega)$ satisfies:
\begin{equation*}
a(u,v) = (f,v), \quad v \in V_0.
\end{equation*}
This bilinear form can be affinely decomposed as
\begin{align}
\label{eq:integral}
a(v,w) := \sum_{m=1}^{M} a_m (v,w), \quad
 a_m(v,w) := \integrald{\Omega}{}{k_m(x) (D_m v , D_mw)}{x}, \quad v, w \in V := H^1(\Omega)
\end{align}
where $D_m $ is a first order partial differential operator and
$(\cdot,\cdot)$ stands for some suitable inner product. In the case of a scalar permeability we find $D_1 := \nabla $ and $(\cdot,\cdot)$ stands for the scalar product in $\mathbb{R}^d$. \changed{
While for \eqref{eqn:aWithBlendingFunc} in 2D one
can, as one alternative, e.g.~define
\begin{gather*}
k_1:=(K_{11}-K_{12})
\enspace,\quad
k_2:=K_{12}
\enspace,\quad
k_3:=(K_{22}-K_{11})
\enspace,\\
D_1:=\nabla
\enspace,\quad
D_2:=\nicefrac{\partial}{\partial x}+\nicefrac{\partial}{\partial y}
\enspace,\quad
D_3:=\nicefrac{\partial}{\partial y}
\enspace,
\end{gather*}
%
%
where $K=(K_{ij})$ and the same scalar product $(\cdot,\cdot)$ as above.
Note that this decomposition reduces to the one in case of a scalar
permeability, i.e.~for $K=\text{diag}(k,k)$.}

Let ${\mathcal T}_H $, $H > 0 $ fixed, be a possibly unstructured
simplicial triangulation resolving 
$\Omega$. We call $ {\mathcal T}_H$ also macro-triangulation and denote
its elements by $T$.
Using  uniform mesh refinement, we obtain $ {\mathcal T}_{h/2}$ from $ {\mathcal T}_h$ by decomposing each
element into $2^d$ sub-elements, $h \in \{H/2, H/4, \ldots
\}$; see \cite{Bey95} for the  3D case. The elements of  $ {\mathcal T}_h$
are
denoted by $t$. The macro-triangulation is then decomposed into the following geometrical primitives: \emph{elements, faces, edges}, and \emph{vertices}. Each of these geometric primitives  acts as
a container for a subset of unknowns associated with the refined triangulations. These sets of unknowns can be stored in array-like
data structures, resulting in a contiguous memory layout that conforms inherently
to the refinement hierarchy; see \cite{bergen2007hierarchical,gmeiner2015towards}. In particular, the unknowns can be accessed without indirect
addressing such that the overhead is reduced significantly when compared to
conventional sparse matrix data structures.
Associated with   $ {\mathcal T}_h$ is the space $V_h \subset V$ of piecewise linear
finite elements. In $V_h$, we do not include the homogeneous boundary conditions.
We denote by $\phi_i \in V_h$  the nodal basis functions associated to the $i$-th mesh node. 
Node $i$ is located at the vertex $x_i$.
For $v_h := \sum_{i} \nu_{i} \phi_{i} $ and $w_h := \sum_{j} \chi_{j} \phi_{j}
$, we define our scaled discrete bilinear forms $a_h (\cdot,\cdot)$ and 
$a_m^h (\cdot,\cdot)$ by
\begin{subequations}
\label{eq:meshdep}
\begin{align} \label{eq:affinesplit}
a_h(v_h,w_h) &:= \sum_{m=1}^M a_m^h (v_h,w_h), \\ \label{eq:discrete}
 a_m^h(v_h,w_h) &:= \frac 14 \sum_{T \in {\mathcal T}_{H}}
 \sum_{i,j}  (k_m |_T (x_{i}) +  k_m |_T (x_{j}) )  (\nu_{i} - \nu_{j} ) (\chi_{j}  - \chi_{i}) \integrald{T}{}{(D_m \phi_i, D_m \phi_j)}{x}.
\end{align}
\end{subequations}
This definition is motivated by the fact that $ a_m(v_h,w_h)$ can be written as
\begin{align}
\label{eq:exakt}
 a_m(v_h,w_h) &= \frac 12 \sum_{T \in {\mathcal T}_{H}}
 \sum_{i,j}   (\nu_{i} - \nu_{j} ) (\chi_{j}  - \chi_{i}) \integrald{T}{}{k_m(x)  (D_m \phi_i, D_m \phi_j)}{x}.
\end{align}
Here we have exploited symmetry and the row sum property. It is obvious that if $k_m$ is a constant restricted to $T$, we do obtain
$a_m^h(v_h,w_h)=a_m(v_h,w_h)$. In general however, the
definition of $a_h(\cdot,\cdot)$ introduces a variational crime and it does not even correspond to an element-wise local assembling based on a quadrature formula. We note that each node  on $\partial T$ is redundantly existent in the data structure and that we can easily account for jumps in the coefficient function when resolved by the macro-mesh elements $T$.

Similar scaling techniques have been used in \cite{Yang:1997:PhD} for a generalized Stokes problem from \changed{geodynamics} with coupled velocity components.
However, for vectorial equations such a simple scaling does asymptotically not result in a physically correct solution.
For the computation of integrals on triangles containing derivatives in the form of (\ref{eq:integral}), cubature formulas of the form (\ref{eq:discrete}) in combination
with Euler-MacLaurin type asymptotic expansions have been applied
\cite[Table 1]{Lyness1998}.

\begin{remark}
At first glance the Definition \eqref{eq:discrete} might not be more attractive than \eqref{eq:exakt} regarding the computational cost. In a matrix free approach, however,
where we have to reassemble the entries in each matrix call,
\eqref{eq:discrete} turns out to be much more favorable. In order see this, we have to recall that we work with hybrid hierarchical meshes. This means that for each inner node $i$ in $T$, we find the same entries in the sense that
$$
 \integrald{T}{}{(D_m \phi_i, D_m \phi_j)}{x}  =  \integrald{T}{}{(D_m \phi_{l}, D_m \phi_{x_j + \delta x})}{x}.
 $$
Here we have identified the index notation with the vertex notation, and the vertex $x_l$ is obtained from the vertex $x_i$ by a shift of $\delta x$, i.e.,
$x_l = x_i + \delta x$. Consequently, the values of $ \integrald{T}{}{(D_m \phi_i, D_m \phi_j)}{x} $ do not have to be re-computed but can be efficiently stored.
\end{remark}

For simplicity of notation, we shall restrict
ourselves in the following to the case of the Darcy equation with a scalar uniformly positive definite permeability; i.e., $M=1$ and drop  the index $m$. However, the \changed{proofs in Sec.~\ref{sec:analysis}} can be generalized to conceptually the
 same type of results for $M > 1$. In Subsection~\ref{sub:generalM}, we also show numerical results for $M=6$ in 3D.

\subsection{Stencil structure}
\label{sec:stencil}
We exploit the hierarchical grid structure to  save a significant amount of
memory compared to classical sparse matrix formats.
%
Any direct neighbor $x_j \in \mathcal{N}_T(x_i)$ can be described through a direction
vector $\stencildir_j$ such that $x_j = x_i + \stencildir_j$. The regularity
of the grid in the interior of a macro-element $T$ implies that these vectors
remain the same, when we move from one node  to another node.
Additionally, for each neighbor $x_j \in \mathcal{N}_T(x_i)\setminus\{x_i\}$
there is a mirrored neighbor $x_j'$ of $x_i$ reachable by
$\stencildir_j = -\stencildir_{j'}$; see Fig.~\ref{fig:stencildir}.

Let $n_i = \left| \mathcal{N}_T(x_i) \right|$ denote the stencil size at mesh node $x_i$. We define the stencil
$\hat s_{x_i}^T \in \mathbb{R}^{n_i}$ associated to the i-th mesh node $x_i$  restricted on $T$ as
\begin{align*}
(\hat s_{x_i}^{T})_j
:= \integrald{T}{}{(\nabla \phi_{x_i + w_j}, \nabla \phi_{x_i})}{x}
\enspace.
\end{align*}
The symmetry of the bilinear form yields
\begin{align*}
(\hat s_{x_i}^{T})_j = (\hat s_{x_i + w_j}^{T})_{j'}.
\end{align*}
We recall that for each mesh node $x_i$ we have $n_i \leq 7$ in 2D and $n_i \leq 15$ in 3D.
Out of these entries only 3 in 2D and 7 in 3D have to be computed since the remaining ones
follow from symmetry arguments and the observation that 
$ \sum_{j} (\hat s_{x_i}^{T})_j =0$.

\begin{figure}[htb]
  %
%
%
%
%
%
  \begin{minipage}[h]{0.31\textwidth}
	\centering
	
	\begin{tikzpicture}[x={(0:1cm)}, y={(90:1cm)}, z={(210:0.5cm)}]
	\tikzset{font={\fontsize{8pt}{12}\selectfont}}
	
	\begin{scope}[auto, shift={(0,0)}, scale=2,every node/.style={draw,circle,minimum size=1.6em,inner sep=1},node distance=2cm]
	
	\node[draw,circle] at (-1,0,0) (mw) {$1'$};
	\node[draw,circle] at (0,0,0) (mc) {$0$};
	\node[draw,circle] at (1,0,0) (me) {$1$};
	\node[draw,circle] at (-1,0,-1) (mnw) {$3$};
	\node[draw,circle] at (0,0,-1) (mn) {$2$};
	\node[draw,circle] at (0,0,1) (ms) {$2'$};
	\node[draw,circle] at (1,0,1) (mse) {$3'$};
	
	\node[draw,circle] at (0,1,1) (ts) {$6$};
	\node[draw,circle] at (1,1,1) (tse) {$7$};
	\node[draw,circle] at (-1,1,0) (tw) {$5$};
	\node[draw,circle] at (0,1,0) (tc) {$4$};
	
	\node[draw,circle] at (0,-1,0) (bc) {$4'$};
	\node[draw,circle] at (1,-1,0) (be) {$5'$};
	\node[draw,circle] at (-1,-1,-1) (bnw) {$7'$};
	\node[draw,circle] at (0,-1,-1) (bn) {$6'$};

	\draw[->,line width=2pt] (mc) -- (me);
	\draw[->,line width=2pt] (mc) -- (mnw);
	\draw[->,line width=2pt] (mc) -- (mn);
	\draw[->,line width=2pt] (mc) -- (ts);
	\draw[->,line width=2pt] (mc) -- (tse);
	\draw[->,line width=2pt] (mc) -- (tw);
	\draw[->,line width=2pt] (mc) -- (tc);
	\draw[->,line width=2pt] (mc) -- (bc);
	\draw[->,line width=2pt] (mc) -- (be);
	\draw[->,line width=2pt] (mc) -- (bnw);
	\draw[->,line width=2pt] (mc) -- (bn);
	\draw[->,line width=2pt] (mc) -- (ms);
	\draw[->,line width=2pt] (mc) -- (mse);
	\draw[->,line width=2pt] (mc) -- (mw);
	
	\draw (mnw) -- (mw) -- (ms) -- (mse) -- (me) -- (mn) -- (mnw);
	\draw (tc) -- (tw) -- (ts) -- (tse) -- (tc);
	\draw (bc) -- (be) -- (bn) -- (bnw) -- (bc);
	
	\end{scope}
	
	\end{tikzpicture}
  \end{minipage}
  \begin{minipage}[h]{0.31\textwidth}
    \centering
    \includegraphics[width=0.85\textwidth]{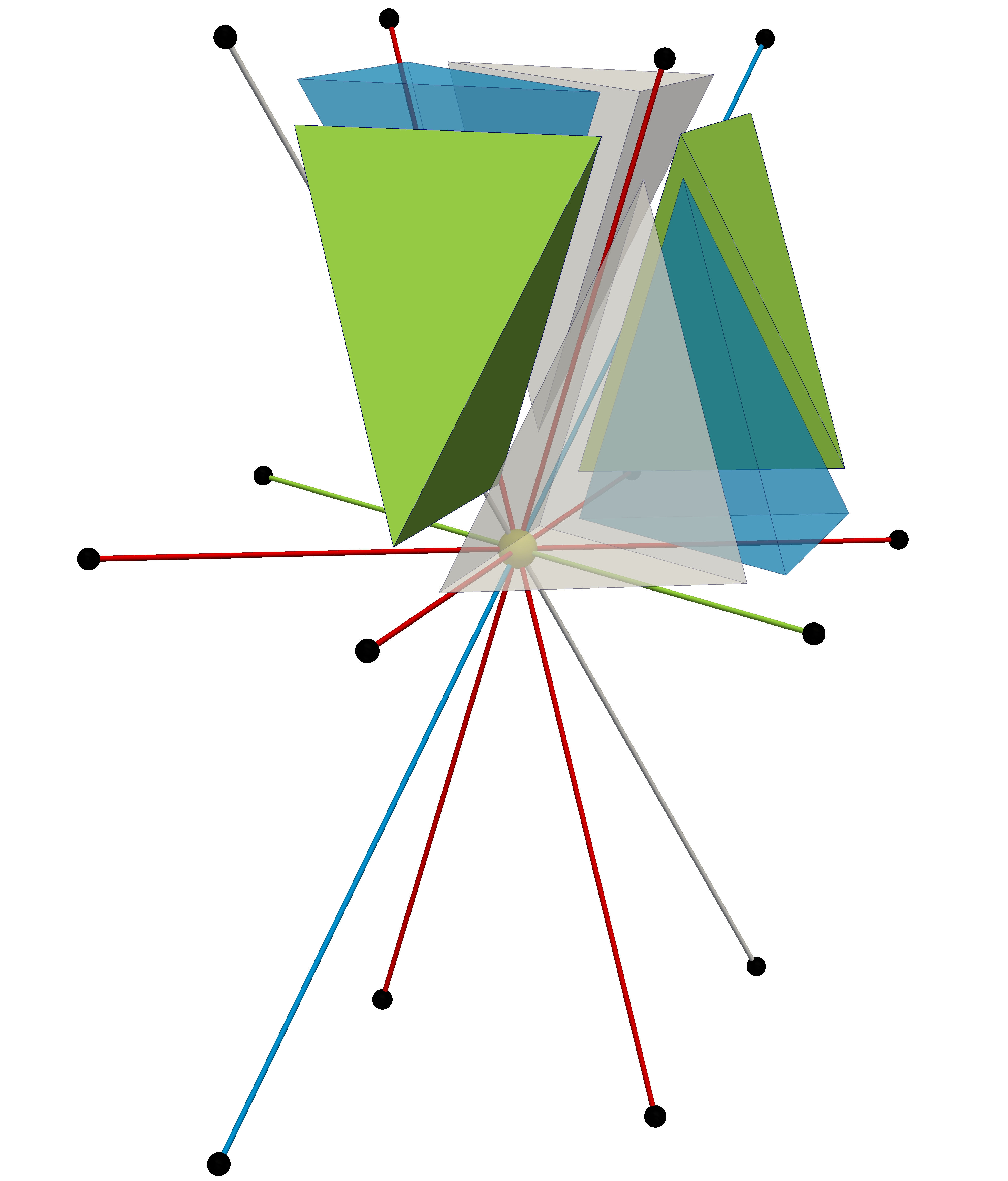}
 \end{minipage}
  \begin{minipage}[h]{0.31\textwidth}
    \centering
    \includegraphics[width=0.85\textwidth]{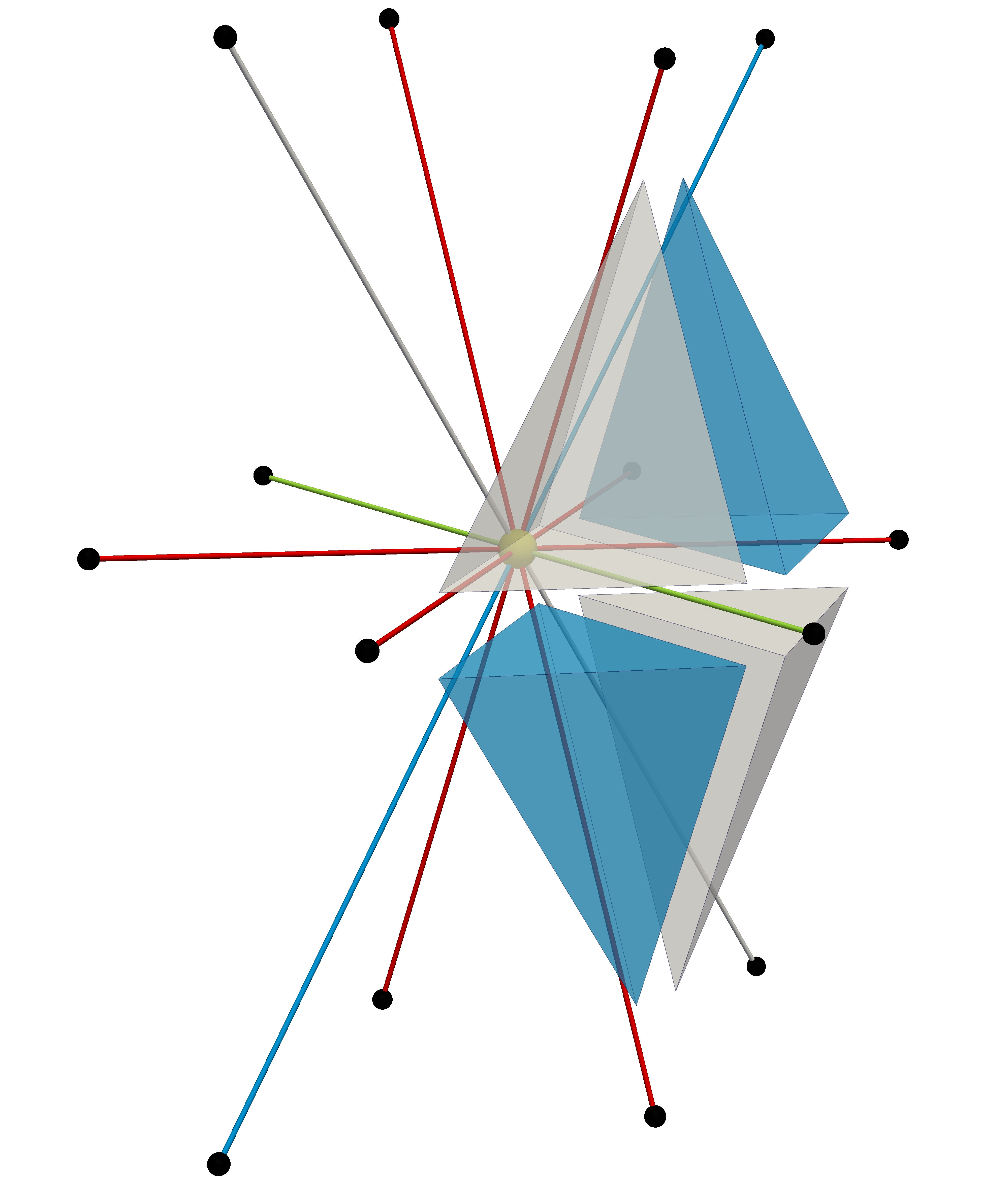}
    \end{minipage}
\caption{\label{fig:stencildir}
  From left to right:  Exemplary local indices $j$ and their corresponding direction vectors of a 15 point stencil in 3D; Six elements attached to one edge;  Four elements attached to one edge}
\end{figure}

Due to the hierarchical hybrid grid structure, two stencils $\hat s_{x_{i_1}}^{T} $ and $\hat s_{x_{i_2}}^{T} $ are exactly the same if $x_{i_1}$ and $x_{i_2}$ are two nodes belonging to the same primitive; i.e., we find only $15$ different
kinds of stencils per macro-element in
3D\changed{, one for each of its 15 primitives (4 vertices, 6 edges, 4 faces, 1 volume),} and $ 7$ in 2D.
This observation allows for an extremely fast and memory-efficient on-the-fly (re)assembly of the entries of the stiffness matrix in stencil form.
For each node $x_i$ in the data structure, we save the nodal values of the  coefficient function $k$. With these considerations in mind, the bilinear form~\eqref{eq:discrete} can be evaluated
very efficiently and requires only a suitable scaling of the reference entries;
see Sec.~\ref{sec:cost} for detailed cost considerations.
%

\section{Variational crime framework and a priori analysis}
\label{sec:analysis}
 In order to obtain order $h$ and $h^2$ a priori estimates of the modified finite element approximation in the $H^1$- and
 $L^2$-norm, respectively, we analyze
 the discrete bilinear form.
 From now on, we assume that $k |_T \in W^{2, \infty} (T)$ for each $T \in {\mathcal T}_H$. Moreover, we denote by $\| \cdot \|_0 $ the $L^2$-norm on $\Omega$ and $\| \cdot \|_{{\infty}} := \sup_{T \in {\mathcal T}_{H} 
} \| \cdot \|_{{\infty}; T}$ defines  a broken $L^\infty$-norm. We recall that
the coefficient function is only assumed to be element-wise smooth with respect to the macro triangulation.
Existence and uniqueness of a finite element solution $u_h \in V_h \cap V_0$
of
\begin{align*} 
a_h (u_h, v_h) = f(v_h), \quad v_h \in V_h \cap V_0
\end{align*}
 is given provided that the following assumption (A1) holds true:
\begin{itemize}
\item[(A1)] $a_h( \cdot,\cdot) $ is uniformly coercive on $V_h \cap V_0$ 
\item[(A2)] $ | a(v_h,w_h ) - a_h(v_h,w_h) | \lesssim h  \| \nabla k \|_{{\infty}}  \| \nabla v_h \|_0 \| \nabla w_h \|_0 $, $\quad v_h,w_h \in V_h$
\end{itemize}
Here and in the following, the notation $\lesssim$ is used as abbreviation for
$\leq C$, where $C < \infty$ is independent of the mesh-size $h$. The assumption (A2), if combined with Strang's first lemma, yields that the finite element solution results in  ${\mathcal O} (h)$ a priori estimates with respect to the $H^1$-norm; see, e.g., \cite{CIA76,SF08}. We note that for $h$ small enough, the  uniform coercivity (A1) follows from the consistency assumption (A2), since
for $v_h \in V_h $
\begin{align*}
 a_h(v_h,v_h ) \geq a(v_h,v_h ) - |  a_h(v_h,v_h) - a(v_h,v_h)  | \geq C (1 - c h)
  \| \nabla v_h \|_0^2 .
\end{align*}

\begin{remark}
As it is commonly done in the finite element analysis in unweighted Sobolev norms, we allow the generic constant $C$ to be dependent on \changed{the global contrast of k defined by} $\sup_\Omega k / \inf_\Omega k$. \changed{Numerical results, however, show that the resulting bounds may be overly pessimistic for coefficients with large global variations. In \cite{PechsteinScheichl2013} and the references therein, methods to improve the bounds in this case are presented. The examples show that the bounds may be improved significantly for coefficients with a global contrast in the magnitude of about $10^5$.} \changed{We are mainly interested in showing alternative assembly techniques to the standard finite element method and in comparing them to the well-established approaches in standard norms. Moreover, in our modification only the local variation of the coefficient $k$ is important, therefore we shall not work out these subtleties here.}
\end{remark}

\subsection{Abstract framework for  \texorpdfstring{$\mathbf{L^2}$}{L2}-norm estimates}
Since (A2) does not automatically guarantee optimal order $L^2$-estimates, we employ duality arguments. To get a better feeling on the required accuracy of
$a_h(\cdot,\cdot)$, we briefly recall the basic steps occurring in the
proof of the upper bound. As it is standard, we assume
$H^2$-regularity of the primal and the dual problem. Restricting
ourselves to the case of homogeneous Dirichlet boundaries, the dual PDE
and boundary operators coincide with the primal ones. Let us denote by $P_h u$ the standard Galerkin approximation of $u$, i.e., the finite element solution obtained as the solution of a discrete problem using the bilinear form $a(\cdot,\cdot)$. It is well-known that under the given assumptions $\| u - P_h u \|_0 =
{\mathcal O} (h^2)$. Now, to obtain an $L^2$-estimate for $u_h$, we consider the dual problem with $u_h - P_h u$ on the right-hand side. Let $w \in V_0 $ be the solution of
$a(v,w) = (u_h - P_hu,v)_0$ for $v \in V_0$. Due to the standard
Galerkin orthogonality, we obtain
\begin{align} \label{eq:dual}
\| u_h - P_h u \|_0^2 = a(u_h -P_h u,w) = a(u_h - P_h u, P_h w) =
a(u_h, P_h w) - a_h(u_h, P_h w) .
\end{align}
This straightforward consideration shows us that compared to (A2), we need to make stronger assumptions on the mesh-dependent
bilinear form $a_h(\cdot,\cdot)$. We define (A3) by 
\begin{itemize}
 \item[(A3)] $
 | a(v_h,w_h ) - a_h(v_h,w_h) | \lesssim h^2 \| H k \|_{{\infty}}
 \| \nabla v_h \|_0 \| \nabla w_h \|_0 +  h \| \nabla k
 \|_{{\infty}}  \| \nabla v_h \|_{0;S_h} \| \nabla w_h \|_{0; S_h},
 $
\end{itemize}
where $ H k$ denotes the Hessian  of $k$ and $S_h := \cup_{T \in 
{\mathcal T}_{H}} S_h(T)$ with
$S_h(T) := \{ t \in {\mathcal T}_h; \partial t \cap \partial T \neq
\emptyset \} $; see  Fig.~\ref{fig:streifen} for a 2D illustration.
The semi-norm $\| \cdot \|_{0;S_h}$ stands for the $L^2$-norm restricted to $S_h$.

\begin{figure}[ht]
\centering
\includegraphics[width=0.25\textwidth]{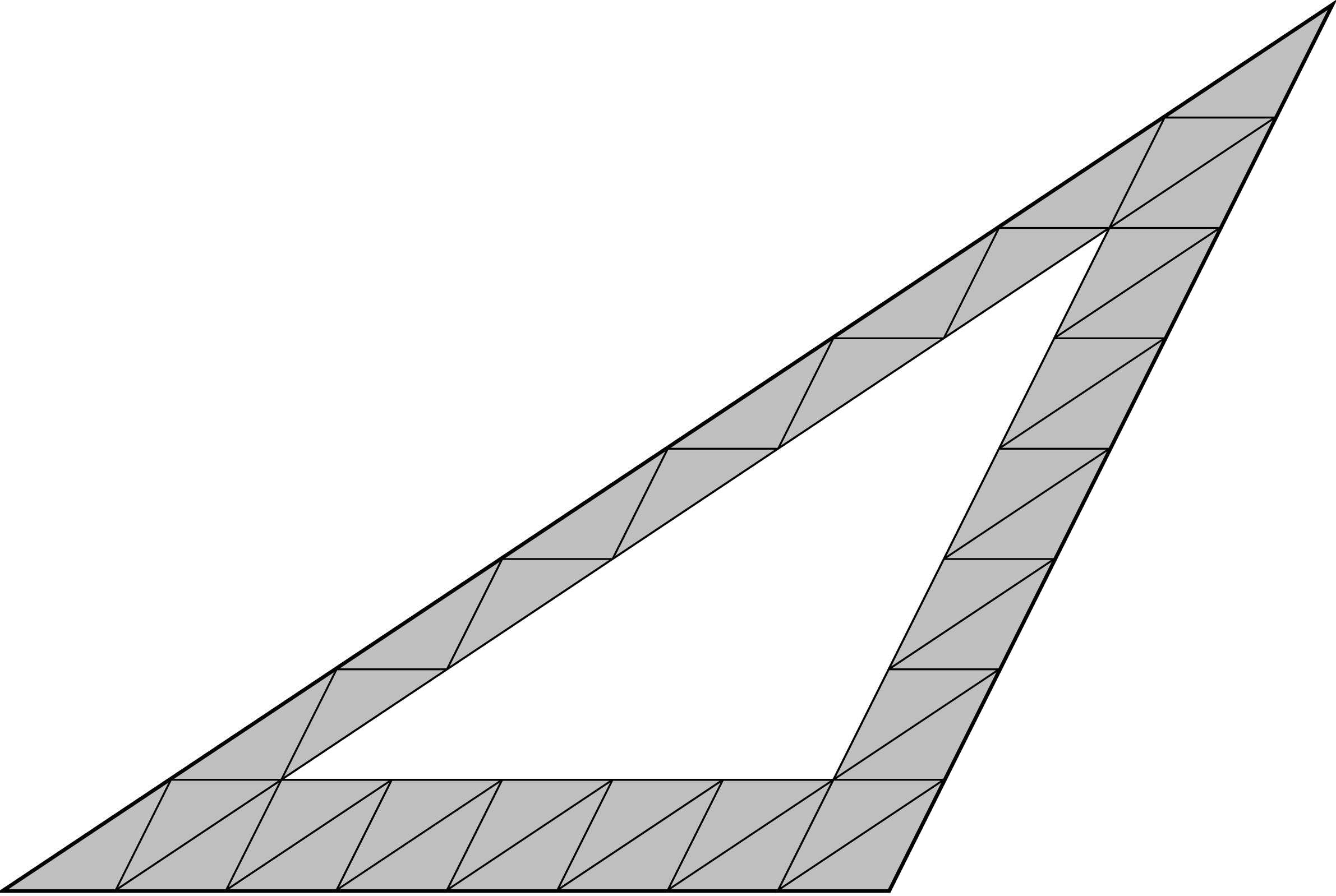}
\hspace*{2cm}
\includegraphics[width=0.4\textwidth]{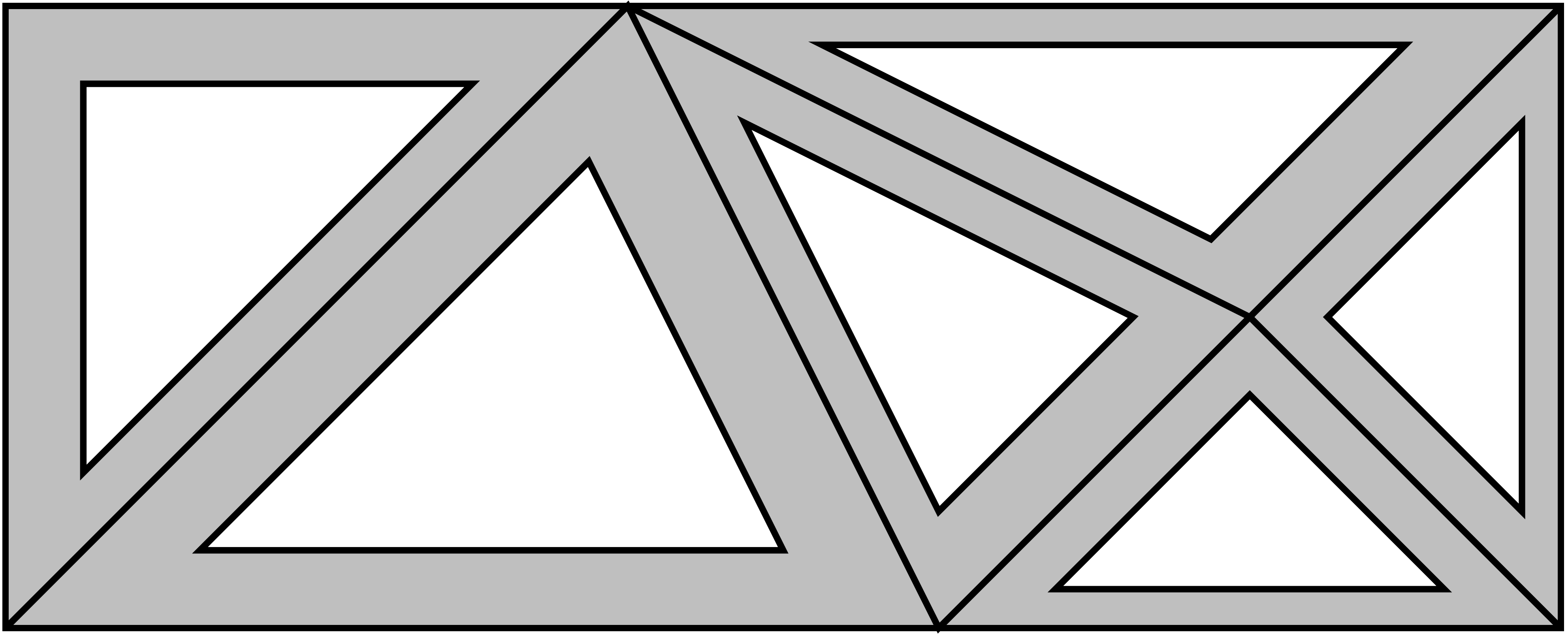}
\caption{Elements in $S_h (T) $ (left) and $S_h$ (right) for $d=2$ \label{fig:streifen}}
\end{figure}

\begin{lemma} \label{lem:optimalorder}
Let the problem under consideration be $H^2$-regular,  $h$ be sufficiently small and (A3) be satisfied.
%
Then we obtain a unique solution and optimal order convergence in the $H^1$- and the $L^2$-norm, i.e.,
\begin{align} \label{eq:optord}
\| u_h - u \|_0 + h \| \nabla(u_h - u) \|_0 \lesssim h^2 (\| H u \|_0 + \| Hk \|_{{\infty}} \| \nabla u \|_0 + \| \nabla k\|_{{\infty} } ( \| \nabla u \|_0 + \| Hu\|_0) ).
\end{align} 
\end{lemma}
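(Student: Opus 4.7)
The plan is to prove the three parts in sequence: (i) uniqueness and existence of $u_h$, (ii) the $H^1$-bound via Strang's first lemma, and (iii) the $L^2$-bound via the Aubin--Nitsche duality argument set up in \eqref{eq:dual}. For (i) I would first verify that (A3) implies (A2): since $S_h\subset\Omega$ one has trivially $\|\nabla v_h\|_{0;S_h}\leq\|\nabla v_h\|_0$, and for $h$ bounded the Hessian term in (A3) is absorbed by the gradient term, giving
$$|a(v_h,w_h)-a_h(v_h,w_h)|\lesssim h(\|Hk\|_\infty+\|\nabla k\|_\infty)\|\nabla v_h\|_0\|\nabla w_h\|_0.$$
The coercivity computation already displayed in the excerpt then yields uniform coercivity of $a_h$ on $V_h\cap V_0$ for sufficiently small $h$, and hence existence and uniqueness of $u_h$.

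For (ii) I would apply Strang's first lemma with consistency bound (A2) against the nodal interpolant $I_hu$ satisfying $\|u-I_hu\|_{H^1}\lesssim h\|u\|_{H^2}$, which yields $\|\nabla(u-u_h)\|_0\lesssim h(1+\|\nabla k\|_\infty)\|u\|_{H^2}$. For (iii) I would let $w\in V_0\cap H^2(\Omega)$ be the solution of the dual problem with right-hand side $u_h-P_hu$, so that $H^2$-regularity gives $\|w\|_{H^2}\lesssim\|u_h-P_hu\|_0$. Applying (A3) to the identity \eqref{eq:dual} with $v_h=u_h$, $w_h=P_hw$, the first (Hessian) term produces a contribution bounded by $h^2\|Hk\|_\infty\|\nabla u\|_0\|w\|_{H^2}$ after using the $H^1$-stabilities $\|\nabla u_h\|_0\lesssim\|\nabla u\|_0$ (from step (ii)) and $\|\nabla P_hw\|_0\lesssim\|w\|_{H^2}$.

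The technical heart of the argument, and the step I expect to be the main obstacle, is the second (strip) contribution in (A3), which carries only a single factor of $h$. To extract the missing half-power I would use the geometric fact $|S_h|\lesssim h$ together with the trace-type estimate $\|\nabla v\|_{0;S_h}\lesssim h^{1/2}\|v\|_{H^2}$ valid for $v\in H^2(\Omega)$. Since $u_h\notin H^2$, I would bootstrap via step (ii):
$$\|\nabla u_h\|_{0;S_h}\leq\|\nabla(u_h-u)\|_0+\|\nabla u\|_{0;S_h}\lesssim h^{1/2}\|u\|_{H^2},$$
and analogously for $P_hw$, so that the strip term contributes $h\cdot h^{1/2}\cdot h^{1/2}\|\nabla k\|_\infty\|u\|_{H^2}\|w\|_{H^2}=h^2\|\nabla k\|_\infty\|u\|_{H^2}\|w\|_{H^2}$. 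Combining both contributions from (A3), inserting $\|w\|_{H^2}\lesssim\|u_h-P_hu\|_0$, dividing through by $\|u_h-P_hu\|_0$, and finally combining with the classical estimate $\|u-P_hu\|_0\lesssim h^2\|Hu\|_0$ and the $H^2\cap H_0^1$ bound $\|u\|_{H^2}\lesssim\|\nabla u\|_0+\|Hu\|_0$ yields \eqref{eq:optord}. The strip step is precisely what dictates the localized norm $\|\cdot\|_{0;S_h}$ in (A3): the single $h$-factor there is too weak by itself, and only the $O(h)$-thinness of the boundary layer between macro-elements makes the argument close.
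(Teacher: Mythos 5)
Your argument is correct and rests on the same core as the paper's proof: the duality identity \eqref{eq:dual}, assumption (A3), stability of the Galerkin projection $P_h$, and the thin-strip estimate $\|\nabla v\|_{0;S_h}\lesssim \sqrt{h}\,\|v\|_{H^2}$ coming from the 1D Sobolev embedding and the $O(h)$ measure of $S_h$. Where you genuinely differ is in how the strip term involving $u_h$ is closed. You first establish the $O(h)$ $H^1$ bound via Strang's first lemma with (A2) (which, as you correctly note, follows from (A3)), and then feed it into the strip term, so that $\|\nabla u_h\|_{0;S_h}\lesssim \sqrt{h}\,\|u\|_{H^2}$ and every contribution in \eqref{eq:dual} is already $O(h^2)$; no absorption argument is needed. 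The paper instead keeps $\|\nabla(u-u_h)\|_{0;S_h}\le\|\nabla(u-u_h)\|_0$, which leaves the extra term $h^{3/2}\|\nabla k\|_{\infty}\|\nabla(u-u_h)\|_0$ in the $L^2$ estimate, then recovers the $H^1$ bound from the $L^2$ bound by an inverse inequality and absorbs the leftover via the smallness condition $c_1\sqrt{h}\,\|\nabla k\|_\infty\le 1/2$; there the $H^1$ estimate is a byproduct of the duality argument rather than an input to it. Both routes are valid and require comparable smallness of $h$ relative to $\|\nabla k\|_\infty$: yours avoids the inverse-estimate/kick-back step, the paper's avoids invoking Strang separately. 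One small refinement: to match the exact constant structure of \eqref{eq:optord}, which contains no $h^2\|\nabla u\|_0$ term free of derivatives of $k$, you should state the Strang bound as $\|\nabla(u-u_h)\|_0\lesssim h\bigl(\|Hu\|_0+\|\nabla k\|_\infty\|\nabla u\|_0+h\|\nabla k\|_\infty\|Hu\|_0\bigr)$ rather than $h(1+\|\nabla k\|_\infty)\|u\|_{H^2}$; with that bookkeeping your final estimate reproduces \eqref{eq:optord} term by term.
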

\begin{proof}
Given that $h$ is small enough, (A1) follows from (A3). 
In terms of \eqref{eq:dual} and (A3), we get
\begin{align} \label{eq:zwischen}
\| u_h - P_h u \|_0^2 \lesssim h^2 \| H k \|_{{\infty} }  \| \nabla u_h \|_0  \| \nabla P_h w \|_0 + h \| \nabla k \|_{{\infty} }  \| \nabla u_h \|_{0;S_h} \|  \nabla P_h w \|_{0; S_h} .
\end{align}
The stability of the standard conforming Galerkin formulation yields
$  \| \nabla P_h w \|_0 \lesssim \| u_h - P_h u \|_0 $. By Definition
\eqref{eq:meshdep}, we find that  the discrete bilinear form $a_h (\cdot,\cdot) $ is uniformly continuous for a coefficient function in $L^\infty (\Omega)$,
and thus $   \| \nabla u_h \|_0 \lesssim \| \nabla u \|_0 $. 
To bound the two terms involving  $\| \nabla \cdot \|_{0;S_h}$, we use
the 1D Sobolev embedding $H^1( (0,1)) \subset L^\infty ((0,1))$ \cite{Tri95}. More precisely, for an element in $H^s(\Omega)$, $ s > 0.5$, we have
$
\| v \|_{0;S_h}\lesssim \| v \|_{H^s(\Omega)};
$
see \cite{LMWZ10}. Here we use $s=1$ and in terms of
the $H^2$-regularity assumption, we obtain
\begin{align*}
 \| \nabla u_h \|_{0;S_h}  &\leq \| \nabla(u -u_h)  \|_{0; S_h} + \| \nabla u \|_{0;S_h} \lesssim
 \| \nabla (u -u_h)  \|_{0; S_h} +  \sqrt{h} ( \| \nabla  u \|_0 + \| Hu \|_{0}) ,  \\
 \| \nabla P_h w \|_{0;S_h} & \leq \| \nabla(w -P_h w)  \|_{0; S_h} +   \sqrt{h} \| u_h -P_h u \|_{0}  \lesssim \sqrt{h} \| u_h -P_h u \|_{0} .
\end{align*}
Using the above estimates, \eqref{eq:zwischen} reduces to
\begin{align*}
\| u_h - P_h u \|_0 \lesssim  h^2 \left( \| H k \|_{{\infty} }  \| \nabla u\|_0   +  \| \nabla k \|_{{\infty} }  ( \| \nabla  u \|_0 + \| Hu \|_{0}) \right)  + h^{\frac 32} \| \nabla k \|_{{\infty} }  \| \nabla (u -u_h)  \|_{0}.
\end{align*}
 Applying the  triangle inequality and using the approximation properties of the Galerkin finite element solution results in the extra term $h^2 \| Hu \|_0$ in the upper bound for $\| u - u_h\|_0$.
The bound for $ h \| \nabla(u_h - u) \|_0  $ follows by a standard inverse estimate for elements in $V_h$ and the best approximation property of $V_h \cap V_0$. Since for $h$ small enough it holds $c_1 \sqrt{h} \| \nabla k\|_\infty \leq 1/2$ where $c_1 < \infty $ is a suitably fixed positive constant, the upper bound \eqref{eq:optord} follows.
\end{proof}

\subsection{Verification of the assumptions}
\label{sec:hhg}
It is well-known \cite{CIA76} that assumptions (A1)-(A3) are satisfied for the bilinear form 
\begin{align} \label{eq:standardfe}
\tilde a_h (u_h, v_h )  := \sum_{t \in {\mathcal T}_h} \frac{| t|}{d+1}
\sum_{i=1}^{d+1}  k|_t (x_i^t) \nabla u_h|_t (x_i^t) \cdot  \nabla v_h|_t (x_i^t)  = \sum_{t \in {\mathcal T}_h} \bar k_t
 \integrald{t}{}{\nabla u_h \cdot \nabla v_h}{x} ,
\end{align}
Here $x_i^t$ denotes the vertices of the d-dimensional simplex $t$, i.e.,
we approximate the integral by a nodal quadrature rule and $ \bar k_t :=(\sum_{i=1}^{d+1}  k|_t(x_i^t))/(d+1)$.
Thus, to verify the assumptions also for $a_h(\cdot,\cdot)$, it is sufficient to consider
$a_h(v_h,w_h) - \tilde a_h(v_h,w_h)$ in more detail with
$a_h(\cdot,\cdot) $ given by \eqref{eq:meshdep}.
Let
\begin{align*}
\small
\hat A_t := \begin{pmatrix} a_{1,1}^t & a_{1,2}^t&  \ldots & a_{1, d+1}^t \\
a_{1,2}^t &a_{2,2}^t& \ldots &a_{2 ,d+1}^t \\
\vdots & \vdots  & &  \vdots\\
a_{1 ,d+1 }^t & a_{2, d+1}^t & \ldots & a_{d+1, d+1}^t
\end{pmatrix}, \quad 
K_t := \begin{pmatrix} k_{1,1}^t & k_{1,2}^t&  \ldots & k_{1, d+1}^t \\
k_{1,2}^t & k_{2,2}^t& \ldots &k_{2, d+1}^t \\
\vdots & \vdots  & &  \vdots\\
k_{1, d+1 }^t & k_{2 ,d+1}^t & \ldots & k_{d+1, d+1}^t
\end{pmatrix}, 
\end{align*}
be the local stiffness matrix associated with the nodal basis functions
$\phi_i^t $, i.e.,
$a_{i,j}^t := \int_t \nabla \phi_i^t \cdot \nabla  \phi_j^t \ dx $ and the local coefficient function with $ k_{i,j}^t := \frac 12 ( k|_t(x_i^t) + k|_t(x_j^t))$, $ i \neq j$. The diagonal entries of $K_t$ are defined differently as
\begin{align} \label{eq:zerosum}
k_{i,i}^t := \frac{-1}{a_{i,i}^t} \sum_{j \neq i} k_{i,j}^t a_{i,j}^t .
\end{align}
 We introduce the component-wise Hadamard product between two matrices as
$ (B\circ C)_{ij} := B_{ij} C_{ij}$ and define
the rank one  matrix $\tilde K_t$ by $(\tilde K_t)_{ij} :=
\bar k_t$.

Due to the symmetry of $\hat A_t$ and the fact that the row sum of $\hat A_t$ is
equal to zero, we can rewrite the discrete bilinear forms.
With $v_h, w_h \in V_h$, we associate locally elements $\mathbf{v}_t,\mathbf{w}_t \in \mathbb{R}^{d+1}$
with $(\mathbf{v}_t)_i := v_h(x_i^t)$, $(\mathbf{w}_t)_i := w_h(x_i^t)$.
We recall that if $v_h, w_h \in V_h \cap V_0$ and $x_i^t$ is a boundary node, then $ v_h(x_i^t) =0 =  w_h(x_i^t)$.
\begin{lemma} \label{lem:equal}
The bilinear forms given by \eqref{eq:meshdep} and \eqref{eq:standardfe}
have the algebraic form
  \begin{align} \label{eq:equal}
 a_h (v_h, w_h )  &= \frac 12 \sum_{t \in {\mathcal T}_h}  \sum_{i,j=1}^{d+1}
 \left( (\mathbf{v}_t)_i -  (\mathbf{v}_t)_j\right) ( K_t \circ \hat A_t)_{ij}
 \left( (\mathbf{w}_t)_j  -  (\mathbf{w}_t)_i \right), \quad v_h, w_h \in V_h,
 \\ \label{eq:st}
\tilde  a_h (v_h, w_h )  &= \frac 12 \sum_{t \in {\mathcal T}_h}  \sum_{i,j=1}^{d+1}
 \left( (\mathbf{v}_t)_i -  (\mathbf{v}_t)_j\right) ( \tilde K_t \circ \hat A_t)_{ij}
 \left( (\mathbf{w}_t)_j  -  (\mathbf{w}_t)_i \right), \quad v_h, w_h \in V_h.
\end{align}
\end{lemma}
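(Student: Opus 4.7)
My plan is to prove the two identities by the same kind of algebraic manipulation, relying throughout on the symmetry of $\hat A_t$ and on the row-sum-zero property $\sum_{j=1}^{d+1} a_{ij}^t = 0$, which in turn follows from the partition of unity $\sum_{j=1}^{d+1} \phi_j^t \equiv 1$ on $t$. With these two structural facts, both identities reduce to bookkeeping: one simply expands the "telescoped" form $((\mathbf v_t)_i - (\mathbf v_t)_j)((\mathbf w_t)_j - (\mathbf w_t)_i)$ into four terms and cancels the unwanted ones.

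For the simpler identity \eqref{eq:st}, I would first rewrite the defining expression \eqref{eq:standardfe} in matrix form as $\tilde a_h(v_h,w_h) = \sum_{t} \bar k_t \sum_{i,j=1}^{d+1} (\mathbf v_t)_i \, a_{ij}^t \, (\mathbf w_t)_j$. Then I would expand the right-hand side of \eqref{eq:st}, using $(\tilde K_t \circ \hat A_t)_{ij} = \bar k_t\, a_{ij}^t$, into four double sums. Two of them have the shape $(\mathbf v_t)_i (\mathbf w_t)_i \sum_j a_{ij}^t$ respectively $(\mathbf v_t)_j (\mathbf w_t)_j \sum_i a_{ij}^t$, and both vanish by the row-sum-zero property. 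The remaining two sums coincide after swapping summation indices by the symmetry of $\hat A_t$, and together reproduce $\sum_{i,j} (\mathbf v_t)_i a_{ij}^t (\mathbf w_t)_j$. This step is the algebraic core of the lemma.

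For \eqref{eq:equal} the argument is a re-indexing from macro- to fine-element sums. Splitting the macro-integral additively, $\int_T (\nabla \phi_i, \nabla \phi_j)\, dx = \sum_{t \subset T} \int_t \nabla \phi_i \cdot \nabla \phi_j \, dx$, converts the outer $T$-sum in \eqref{eq:discrete} (with $M=1$) into a sum over fine elements $t \in {\mathcal T}_h$. For each $t$, the integrand vanishes unless both $i$ and $j$ are vertices of $t$, so the global node sum collapses to a sum over the $d+1$ local vertex indices. Since $k$ is smooth on each macro element and $t \subset T$, one has $k|_T(x_i) = k|_t(x_i)$ for every vertex $x_i$ of $t$, hence $\tfrac12(k|_T(x_i) + k|_T(x_j)) = k_{ij}^t$ whenever $i \neq j$; the prefactor $\tfrac14$ then turns into the $\tfrac12$ of \eqref{eq:equal}. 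The only point that might look delicate is that the diagonal entries $k_{ii}^t$ are defined by the nonstandard formula \eqref{eq:zerosum}, but the skew factor $(\mathbf v_t)_i - (\mathbf v_t)_j$ kills every diagonal contribution, so the specific choice of $k_{ii}^t$ is irrelevant for both identities. The duplication of nodes on $\partial T$ in the HHG data structure is already accounted for in the element-wise formulation of $a_h(\cdot,\cdot)$ and requires no extra treatment.
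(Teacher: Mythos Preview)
Your proof is correct and follows essentially the same route as the paper's, which argues \eqref{eq:equal} in the reverse direction (starting from the element-wise right-hand side, regrouping via local-to-global assembly into sums over global node pairs in each $T$, and identifying the result with definition \eqref{eq:discrete}) and then says \eqref{eq:st} follows by the same argument. Your explicit observation that the diagonal entries $k_{ii}^t$ are irrelevant because of the skew factor $(\mathbf{v}_t)_i-(\mathbf{v}_t)_j$ is the same point the paper records by noting the symmetry and zero-row-sum of $K_t\circ\hat A_t$.
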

\begin{proof} 
We note that \eqref{eq:zerosum} yields that the row sum of $K_t
\circ \hat A_t$ is equal to zero. Moreover, $K_t \circ \hat A_t$ is by construction symmetric. Introducing
for $ T \in {\mathcal T}_H $ the set $ {\mathcal T}_h^{i,j;T}$ of all
elements $T \supset t \in {\mathcal T}_h$ sharing the global nodes $i$ and $j$,
 we identify by $i_t$ the local index of the node $i$ associated with the element $t$ and by $j_t$ the local index of the global node $j$. Then the standard local to global assembling process  yields that
the right-hand side in \eqref{eq:equal}  reads
 \begin{align*} 
\frac 12  \sum_{T \in {\mathcal T}_{H}}
 \sum_{i,j}  (\nu_{i} - \nu_j) (\chi_{j} - \chi_{i})    \sum_{t \in {\mathcal T}_h^{i,j;T}} k_{i_t j_t}^t a_{i_t j_t}^t .
\end{align*}
Comparing this result with the definition  \eqref{eq:discrete}, we find 
equality since the coefficient function is assumed to be smooth within each $T$.
The proof of \eqref{eq:st} follows with exactly the same arguments as the one for
\eqref{eq:equal}.
\end{proof}
Although the proof of the previous lemma is straightforward, the
implication of it for large scale simulations cannot be
underestimated. In 2D, the number of different edge types per macro-element is three  while in 3D it is seven
assuming uniform refinement.
All edges in 3D in the interior of a
macro-element $T$ share only four or six elements; see Fig. \ref{fig:stencildir}.
We have three edge types that have four elements attached to them and four edge types with six adjacent elements.

The algebraic formulations \eqref{eq:st} and \eqref{eq:equal}
allow us to estimate the effects of the 
variational crime introduced by the stencil scaling approach.
\begin{lemma} \label{lem:upper}
Assumptions (A2) and (A3) hold true.
\end{lemma}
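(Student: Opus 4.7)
The plan is to insert the standard nodal-quadrature form $\tilde a_h$ from \eqref{eq:standardfe} between $a$ and $a_h$ and apply the triangle inequality $|a-a_h|\leq|a-\tilde a_h|+|\tilde a_h-a_h|$. For the first piece the classical Ciarlet/Bramble--Hilbert quadrature-error analysis applies verbatim: the vertex rule is exact on linear $k$, so elementwise the error is of order $h^2\|Hk\|_{\infty;T}$, and summation with Cauchy--Schwarz over all fine elements yields $|a(v_h,w_h)-\tilde a_h(v_h,w_h)|\lesssim h^2\|Hk\|_\infty\|\nabla v_h\|_0\|\nabla w_h\|_0$, which already supplies (A2) and the first summand in (A3).

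For the core piece $\tilde a_h-a_h$ I would start from the algebraic representations in Lemma~\ref{lem:equal}, whose difference can be reorganised by fine-mesh edges $e=\{i,j\}$ as $\sum_e(\nu_i-\nu_j)(\chi_j-\chi_i)\sum_{t\ni e}(\tilde K_t-K_t)_{ij}\,a_{ij}^t$. A Taylor expansion of $k|_T$ around the edge midpoint $\bar x_{ij}:=(x_i+x_j)/2$ gives $\tfrac12(k(x_i)+k(x_j))=k(\bar x_{ij})+O(h^2\|Hk\|_\infty)$ and $\bar k_t=k(\bar x_{ij})+\nabla k(\bar x_{ij})\cdot(\bar x_t-\bar x_{ij})+O(h^2\|Hk\|_\infty)$, so $(\tilde K_t-K_t)_{ij}=\nabla k(\bar x_{ij})\cdot(\bar x_t-\bar x_{ij})+O(h^2\|Hk\|_\infty)$. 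The crude bound $|(\tilde K_t-K_t)_{ij}|\lesssim h\|\nabla k\|_\infty$ combined with $|a_{ij}^t|\lesssim h^{d-2}$, $|\nu_i-\nu_j|\lesssim h\|\nabla v_h\|_{L^\infty(t)}$, summation over elements and Cauchy--Schwarz instantly gives the contribution from $\tilde a_h-a_h$ to (A2).

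For (A3) I have to exploit cancellation in the inner edge sum. When the edge $e$ lies entirely in the interior of a single macro-element $T$, the HHG subdivision arranges the fine elements $t\ni e$ in a locally symmetric fan (cf.~Fig.~\ref{fig:stencildir}): the barycenters $\bar x_t$ come in mirror pairs through $\bar x_{ij}$, and by the primitive invariance of Section~\ref{sec:stencil} the entries $a_{ij}^t$ are constant along such a pair. This yields the geometric identity $\sum_{t\ni e,\,t\subset T}(\bar x_t-\bar x_{ij})\,a_{ij}^t=0$, so the linear-in-$\nabla k$ term of the expansion above vanishes in the inner sum and only the $O(h^2\|Hk\|_\infty)$ remainder survives. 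For edges on $\partial T$ the fan is truncated and across macro-interfaces the coefficient may also jump, so cancellation fails and one keeps the $O(h\|\nabla k\|_\infty)$ bound, but now supported only on $S_h$. Reassembling the two edge classes with Cauchy--Schwarz delivers exactly the two terms of (A3).

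The main obstacle is verifying the cancellation identity $\sum_{t\ni e,\,t\subset T}(\bar x_t-\bar x_{ij})\,a_{ij}^t=0$ for every interior fine-mesh edge of a macro-element. In 2D this reduces to a direct reflection argument on the two triangles sharing $e$. In 3D it requires a short but careful case distinction between the four- and six-element fans of the Bey refinement (Fig.~\ref{fig:stencildir}), using that within each fan the local element geometry — and therefore each $a_{ij}^t$ — is invariant under the relevant reflection, so that the displacements $\bar x_t-\bar x_{ij}$ pair up to zero.
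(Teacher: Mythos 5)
Your proposal follows essentially the same route as the paper: insert the nodal-quadrature form $\tilde a_h$ (which is known to satisfy (A2)/(A3)) and bound $a_h-\tilde a_h$, obtaining (A2) from the crude $O(h\|\nabla k\|_\infty)$ bound on $K_t-\tilde K_t$, and (A3) by regrouping the difference over fine edges, Taylor-expanding $k$ at the edge midpoint, and keeping the $O(h\|\nabla k\|_\infty)$ bound only for the boundary-layer edges supported in $S_h$. Your cancellation identity $\sum_{t\ni e}(\bar x_t-\bar x_{ij})\,a_{ij}^t=0$ is precisely the paper's pairing of each fan element $t$ with its point reflection $t^m$ through the edge midpoint (using $(\hat A_t)_{1,2}=(\hat A_{t^m})_{2,1}$ and that the midpoint is the barycenter of $t\cup t^m$), so the argument is sound and matches the published proof.
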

\begin{proof}
The required  ${\mathcal O} (h) $ bound for (A2) is straightforward
and also holds true for any unstructured mesh refinement strategy. Recalling that $K_t =
\tilde K_t$ if the coefficient function $k$ restricted to $t$ is a constant, 
\eqref{eq:st} and \eqref{eq:equal} yield
\begin{align*}
| a_h (v_h, w_h ) - \tilde a_h (v_h,w_h) | &=  \left|
 \frac 12 \sum_{t \in {\mathcal T}_h}  \sum_{i,j=1}^{d+1}
 \left( (\mathbf{v}_t)_i -  (\mathbf{v}_t)_j\right) \left(
 ( K_t - \tilde K_t) \circ \hat A_t\right)_{ij}
 \left( (\mathbf{w}_t)_j  -  (\mathbf{w}_t)_i \right)\right|\\
& \! \! \! \lesssim h^{2-d}
\max_{t \in {\mathcal T}_h}  \left( \lambda_{\max} (K_t - \tilde K_t)
\lambda_{\max} (\hat A_t) \right)
\| \nabla v_h \|_0 \| \nabla w_h\|_0  \\
& \! \! \! \lesssim
\max_{t \in {\mathcal T}_h}  \left( \lambda_{\max} (K_t - \tilde K_t)
 \right)
\| \nabla v_h \|_0 \| \nabla w_h\|_0   \lesssim  h \| \nabla k
\|_{L^\infty}  \| 
\nabla v_h \|_0 \| \nabla w_h\|_0 ,
\end{align*}
where $\lambda_{\max}(\cdot) $ denotes the maximal eigenvalue of its
argument.

To show (A3), we have to exploit the structure of the mesh ${\mathcal T}_h$. Let ${\mathcal E}_h^T $ be the set of all edges in $\bar T$
and  ${\mathcal T}_h^{e;T}$ the subset of elements $t \subset T$
which share the edge $e$ having the two global nodes $i $ and $j$ as endpoints. As before, we identify the local indices of these endpoints by $i_t$ and $j_t$. We note that the two sets ${\mathcal
  E}_h^{T}$ and  ${\mathcal E}_h^{\hat T}$, $T \neq \hat T$  are not
necessarily disjoint. Observing that each element $t \subset T$ is
exactly contained in $\frac 12 d(d+1)$ elements of $
 {\mathcal T}_{h}^{e;T}$, we find
\begin{align*}
a_h (v_h, w_h ) -& \tilde a_h (v_h,w_h)  =\\
& \frac{1}{d(d+1)} \sum_{T \in {\mathcal T}_{H} }
\sum_{e \in {\mathcal E}_h^T}  \sum_{t \in {\mathcal T}_{h}^{e;T} }  ((\mathbf{v}_t)_{i_t} - (\mathbf{v}_t)_{j_t}) \left(( K_t - \tilde K_t) \circ \hat A_t\right)_{i_t j_t} ((\mathbf{w}_t)_{j_t} - (\mathbf{w}_t)_{i_t}), 
\end{align*}
and thus it is sufficient to focus on the contributions resulting from
$t \in {\mathcal T}_{h}^{e;T}$. We consider two cases separately: 
First, we consider the case that the edge $e$ is part of $\partial T$ for
at least one $T$, then we directly find the upper bound
 \begin{align*}
  h \| \nabla k \|_{L^\infty(t )} \| \nabla v_h\|_{t} \|\nabla w_h\|_t .
\end{align*}

Second, we consider the case  that $e$ is in the interior of one $T$. Then for each element $t  \in {\mathcal T}_{h}^{e;T}
$ there exists exactly one $t^m \in {\mathcal T}_{h}^{e;T}$ such that $t^m$ is obtained by point reflection at the midpoint of the edge $e$; see Fig.
\ref{fig:reflected}. In the following, we exploit that the midpoint of the edge is the barycenter of $t \cup t^m$. Here the local indices $i_t $ and $j_t$ are associated with the  global nodes $i$ and $j$, respectively. Without loss of generality, we assume a local renumbering such that $i_t = 1$, $j_t = 2$ and that
$ x_{i_t^m}^{t^m}$ is the point reflected vertex of $x_{i_t}^t$; see also Fig.~\ref{fig:reflected}.

\begin{figure}[ht]
\begin{center}
\subcaptionbox{\label{fig:localnumber}Local numbering along an inner edge $e$ in 2D}[0.4\textwidth]{
\centering
\begin{tikzpicture}

\coordinate (one) at (0,0);
\coordinate (two) at (0,2);
\coordinate (three1) at (-2,1.5);
\coordinate (three2) at (2,0.5);
\coordinate (twoprime) at (0,-2);
\coordinate (three1prime) at (2,-1.5);
\coordinate (three2prime) at (-2,-0.5);

\draw[fill=lightgray!60] (one) -- (three1) -- (two);
\draw[fill=lightgray!60] (one) -- (three2) -- (two);
\draw[line width=1.5pt] (one) -- (two);
\draw[line width=1.5pt] (one) -- (three1);
\draw[line width=1.5pt] (one) -- (three2);

\draw[line width=1.5pt] (one) -- (twoprime);
\draw[line width=1.5pt] (one) -- (three1prime);
\draw[line width=1.5pt] (one) -- (three2prime);

\node[above left=5pt and 0pt] at (one) {1};
\node[below left] at (two) {2};
\node[below right=-4pt and 10pt] at (three1) {3};

\node[above right] at (one) {2};
\node[below right=5pt and 0pt] at (two) {1};
\node[above left=-4pt and 10pt] at (three2) {3};

\node[] at ($0.33*(one)+0.33*(two)+0.33*(three1)$) {$t$};
\node[] at ($0.33*(one)+0.33*(two)+0.33*(three2)$) {$t^m$};

\draw[black,fill=Apricot] (one) circle (5pt);
\draw[black,fill=black] (two) circle (3pt);
\draw[black,fill=black] (twoprime) circle (3pt);
\draw[black,fill=black] (three1) circle (3pt);
\draw[black,fill=black] (three2) circle (3pt);
\draw[black,fill=black] (three1prime) circle (3pt);
\draw[black,fill=black] (three2prime) circle (3pt);

\node[circle,below left=7pt and 4pt,draw,inner sep=2.5pt] at (one) {$i$};
\node[circle,above right=5pt and 0pt,draw,inner sep=1.5pt] at (two) {$j$};

\end{tikzpicture}}
\hspace*{2cm}
\subcaptionbox{\label{fig:center}Local numbering along an inner edge $e$ in 3D}[0.4\textwidth]{
\centering
\includegraphics[width=0.25\textwidth]{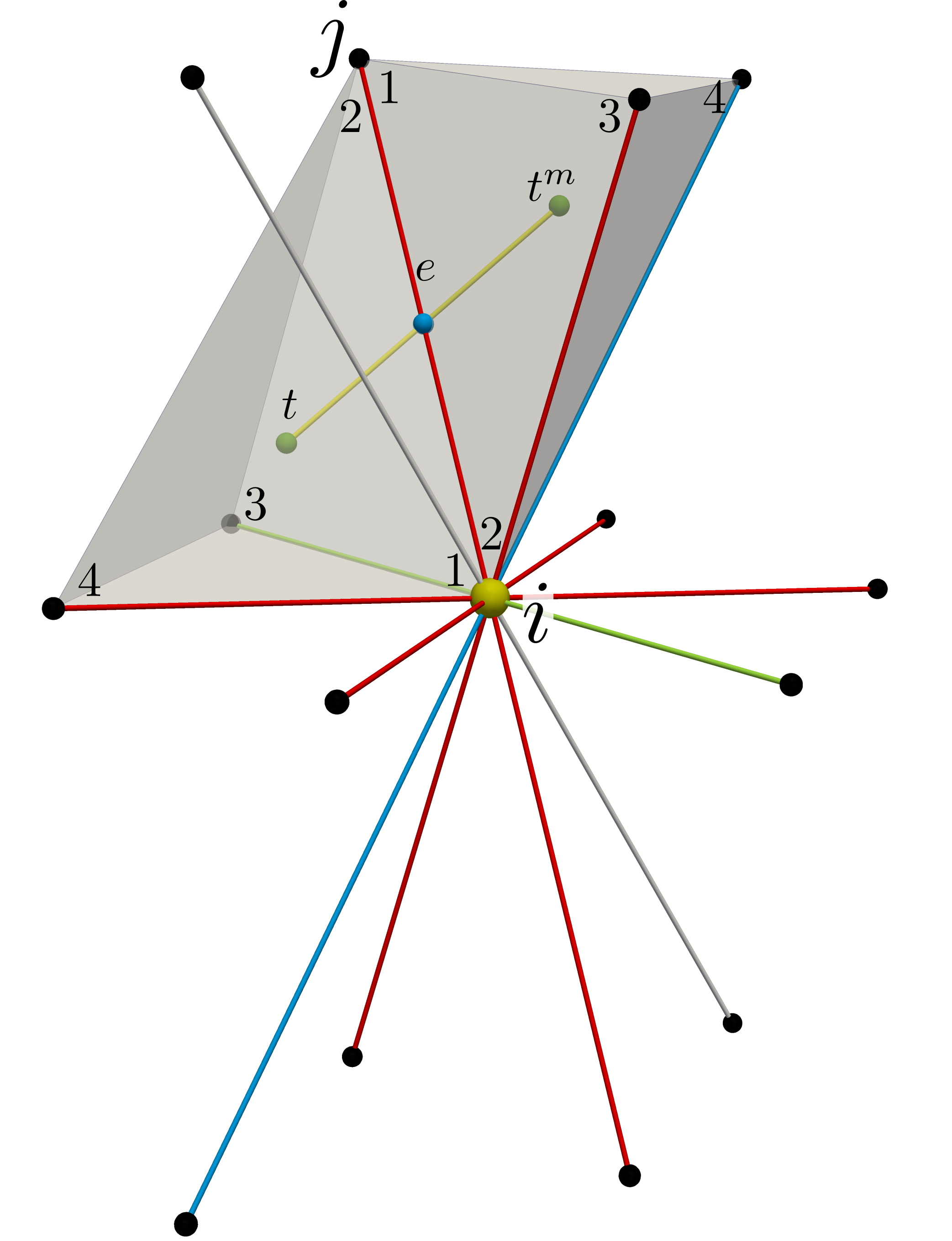}}
\end{center}
\caption{\label{fig:reflected}Local numbering in element $t$ and its point reflected element $t^m$ in 2D and 3D}
\end{figure}

Let us next focus on
\begin{align*}
 E_{ij}^t (v_h,w_h) :=& \left| ((\mathbf{v}_t)_{1} - (\mathbf{v}_t)_{2})  (( K_t - \tilde K_t) \circ \hat A_t )_{1,2} ((\mathbf{w}_t)_{2}  -  (\mathbf{w}_{t})_{1}) \right. \\ &\left. +
 ((\mathbf{v}_{t^m})_{2} - (\mathbf{v}_{t^m})_{1})  (( K_{t^m} - \tilde K_{t^m}) \circ A_{t^m} )_{2,1} ((\mathbf{w}_{t^m})_{1}  -  (\mathbf{w}_{t^m})_{2}) \right| .
\end{align*}
Exploiting the fact that $ (\hat A_t )_{1,2} = (\hat A_{t^m} )_{2,1}$, we can bound $E^t_{ij} (\cdot, \cdot)$ by the local $H^1$-seminorms
\begin{align*}
 E_{ij}^t (v_h,w_h) \lesssim &  \| \nabla v_h \|_{0;t \cup t^m}  \| \nabla w_h \|_{0;t \cup t^m} \left| (k|_T(x_i) + k|_T(x_j)) - \frac{1}{d+1} \sum_{l=1}^{d+1} (k|_T(x_l^t) + k|_T(x_l^{t^m})) \right| .  
\end{align*}
A Taylor expansion of $k$
in $\frac 12 (x^t_1 + x^{t^m}_1) = \frac 12 (x_i + x_j)$ guarantees that the
terms of zeroth and first order cancel out and only second order derivatives of $k$ scaled with $h^2$ remain, i.e.,
\begin{align*}
 E_{ij}^t (v_h,w_h) \lesssim &  h^2 \| H k \|_{L^\infty(t \cup t^m)} \| \nabla v_h \|_{0;t \cup t^m}  \| \nabla w_h \|_{0;t \cup t^m}  .  
\end{align*}
Then the summation over all macro elements, all edges, and all elements in the subsets ${\mathcal T}_l^{e;T}$ in combination with a finite covering argument~\cite{gariepy1992measure}
yields the upper bound of (A3).
\end{proof}

\section{Guaranteed uniform coercivity}
While for our hierarchical hybrid mesh framework the assumptions (A2) and (A3) are satisfied and thus asymptotically, i.e., for $h$ sufficiently small, also (A1) is satisfied, (A1) is not necessarily guaranteed for any given mesh $\mathcal{T}_h$.

\begin{lemma} \label{lem:pos}
If the matrix representation of the discrete Laplace operator is an M-matrix, then the scaled bilinear form $a_h(\cdot,\cdot) $ is  positive semi-definite on $V_h \times V_h$ for $k$ globally smooth. 
\end{lemma}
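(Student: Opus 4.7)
The plan is to exploit the algebraic representation from Lemma~\ref{lem:equal} to reduce the statement to a sign check on off-diagonal global matrix entries.

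First I would set $v_h = w_h$ in \eqref{eq:equal} to obtain
\begin{equation*}
a_h(v_h,v_h) = -\frac12 \sum_{t\in\mathcal{T}_h}\sum_{i,j=1}^{d+1} \bigl((\mathbf{v}_t)_i-(\mathbf{v}_t)_j\bigr)^2 (K_t)_{ij}(\hat A_t)_{ij}.
\end{equation*}
The contributions from $i=j$ vanish since the squared difference is zero, so the ambiguous diagonal definition \eqref{eq:zerosum} of $K_t$ plays no role. For $i\neq j$ the entries $(K_t)_{ij}=\tfrac12(k(x_i^t)+k(x_j^t))$ are strictly positive, so positivity of the entire expression reduces to a signed sum involving $(\hat A_t)_{ij}$.

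Next I would regroup the element sum into an edge sum. Since $k$ is globally smooth, the value $k(x_i)$ does not depend on which fine element $t$ is chosen to evaluate $k|_t$, so the factor $\tfrac12(k(x_i)+k(x_j))$ is a well-defined per-edge constant and can be pulled outside the inner sum. After this regrouping one gets
\begin{equation*}
a_h(v_h,v_h) = -\frac12\sum_{\{i,j\}} (\nu_i-\nu_j)^2\,\tfrac12\bigl(k(x_i)+k(x_j)\bigr)\sum_{t\supset\{i,j\}}(\hat A_t)_{i_t j_t},
\end{equation*}
where the outer sum runs over fine-mesh edges. The inner sum is exactly the off-diagonal entry $(\hat L)_{ij}$ of the assembled discrete Laplace matrix at the nodes $i,j$. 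By the M-matrix hypothesis, $(\hat L)_{ij}\le 0$ for all $i\neq j$, while $(\nu_i-\nu_j)^2\ge 0$ and the permeability factor is strictly positive. Each summand is therefore non-negative (the leading minus sign combined with $(\hat L)_{ij}\le 0$), which establishes $a_h(v_h,v_h)\ge 0$ for every $v_h\in V_h$.

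The one subtle point — and the only place the global smoothness of $k$ is really used — is the factoring step: if $k$ were merely element-wise smooth and could jump across macro-element interfaces, then $k|_t(x_i)$ would depend on $t$ and one could not collapse the element-wise coefficients into a single per-edge weight $\tfrac12(k(x_i)+k(x_j))$; in that case the sign of each edge contribution would couple the variations of $k$ with the individual $(\hat A_t)_{i_t j_t}$, which the global M-matrix property alone does not control. Given global smoothness, however, no further machinery is needed beyond Lemma~\ref{lem:equal} and the elementary sign argument above.
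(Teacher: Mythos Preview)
Your argument is correct and mirrors the paper's proof: both exploit the global smoothness of $k$ to pull the factor $\tfrac12(k(x_i)+k(x_j))$ outside the element sum, identify what remains as the off-diagonal entry $(\hat A_h)_{ij}$ of the assembled discrete Laplacian, and then invoke the M-matrix sign condition. The only cosmetic difference is that the paper starts directly from definition~\eqref{eq:discrete} (summing over macro elements $T$) rather than detouring through the fine-element representation of Lemma~\ref{lem:equal}, but the resulting identity and sign argument are identical.
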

\begin{proof}
Let $\hat A_h $ be the matrix representation of the discrete Laplace operator, then the M-matrix property guarantees that $(\hat A_h)_{ij} \leq 0 $ for $ i \not=j$. Taking into account that $\sum_{T \in {\mathcal T}_H} \hat s_{ij}^T = (\hat A_h)_{ij} $, definition \eqref{eq:meshdep} for the special case $M=1$ and $k$ globally smooth yields
\begin{align*}
a_h(v_h , v_h) =\frac {-1}{4} 
 \sum_{i,j} (k  (x_{i}) +  k (x_{j}) )  (\nu_{i} - \nu_{j} )^2  (\hat A_h)_{ij} \geq 0.
\end{align*}
\end{proof}

\begin{remark}
  In 2D it is well-known \cite{ciarlet1973maximum} that if all elements of the macro mesh have no obtuse angle, then $\hat A_h$ is an M-matrix and we are in the setting of Lemma \ref{lem:pos}.
\end{remark}

\subsection{Pre-asymptotic modification in 2D based on (A2)}
\label{subsubsec:ma2}
Here we work out the technical details of a modification in 2D
that guarantees uniform ellipticity assuming that at least one macro-element $T$ has an obtuse
angle. 
Our  modification yields a linear condition on the local mesh-size depending on the discrete gradient of $k$.
It only applies to selected stencil directions.
In 2D our $7$-point stencil associated with an interior fine grid node has  exactly two positive off-center entries if the associated macro-element has an obtuse angle. We call the edges associated with a positive reference stencil entry
to be of gray type. 
With each macro-element $T$, we associate the reference stiffness matrix $\hat A_T$.
Without loss of generality, we assume that the local enumeration is done in such a way that the largest interior angle of the macro-element $T$ is located at the local node $3$, i.e., if $T$ has an obtuse angle then $a_{1,2}^T > 0$, $a_{1,3}^T < 0 $, and $a_{2,3}^T < 0$ and otherwise
$a_{i,j}^T \leq 0 $, $ 1 \leq i < j \leq 3$. By $\lambda_{\min}^T$ we denote the smallest  non-degenerated eigenvalue of the generalized eigenvalue problem
\begin{align*}
\hat A_T {\mathbf x} :=  \begin{pmatrix} a_{1,1}^T & a_{1,2}^T & a_{1, 3}^T \\
a_{1,2}^T &a_{2,2}^T &a_{2 ,3}^T \\
a_{1 ,3 }^T & a_{2, 3}^T & a_{3, 3}^T
\end{pmatrix} {\mathbf x} = \lambda
\left(\begin{array}{rrr} 2 & -1 & -1 \\
-1  & 2 & -1 \\
-1 & -1 & 2
\end{array}\right) {\mathbf x}, \quad {\mathbf x} \in {\mathbb R}^3 . 
\end{align*}
We note that both matrices in the eigenvalue problem are symmetric, positive semi-definite and have the same one dimensional kernel and thus $\lambda_{\min}^T > 0$.



Let $ e$ be a gray type edge. For each such edge $e$,  we possibly adapt our approach locally.  We denote by $\omega_{e;T}$ the element patch of all elements $t \in {\mathcal T}_h $, such that $t \subset T$ and $e \subset \partial t$. Then we define
 $$
 k_{e;\min} := \min_{\tilde e \in {{\mathcal E}_h^e}} k_{\tilde e}, \quad k_{\tilde e} :=
 \frac 12 (k|_T (x_1^{\tilde e}) + k|_T (x_2^{\tilde e}))
 $$
 where ${\mathcal E}_h^e$ is the set of all edges being in $\bar \omega_{e;T}$, and
 $x_1^{\tilde e} $ and $x_2^{\tilde e}$ are the two endpoints of $\tilde e$.
In the pre-asymptotic regime, i.e., if 
\begin{equation*}
( k_e - k_{e;\min} )  a_{1,2}^T >  k_{e;\min} \lambda_{\min}^T
\tag{MA2}\label{eqn:MA2}
\end{equation*}
we replace the scaling factor $k_e = \frac 12 (k|_T (x_1^e) + k|_T (x_2^e)) $ in
definition \eqref{eq:meshdep} by
\begin{align*}
  k_e^{\text{mod}} := k_{e;\min}  \left( 1 + \frac{\lambda_{\min}^T}{a_{1,2}^T}\right).
  \end{align*}
  Then it is obvious that $ k_{e;\min} <  k_e^{\text{mod}} < k_e$. We note that $2 a_{1,2}^T $ is the value of the 7-point stencil associated with a gray edge and thus trivial to access. 

\begin{lemma}
Let the bilinear form be modified according to \eqref{eqn:MA2}, then it is
uniformly positive definite on $V_h \cap V_0 \times V_h \cap V_0$ for all
simplicial hierarchical meshes.
\end{lemma}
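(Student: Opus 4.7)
The plan is to establish a local elementwise inequality
$\mathbf{v}_t^\top (K_t^{\mathrm{mod}} \circ \hat A_t) \mathbf{v}_t \geq c_T\, \mathbf{v}_t^\top \hat A_t \mathbf{v}_t$
for every fine sub-element $t \subset T$, with a positive constant $c_T$ depending only on the macro triangulation $\mathcal{T}_H$ and on $k_{\min} := \inf_\Omega k$ — not on the refinement level. Summing over $t$ via the algebraic representation in Lemma~\ref{lem:equal} and using $\sum_t \mathbf{v}_t^\top \hat A_t \mathbf{v}_t = \|\nabla v_h\|_0^2$ together with Poincar\'e will then give the claim. Macro elements $T$ without obtuse angle are handled without modification, since $a_{ij}^T \le 0$ and $k_{ij}\ge k_{\min}$ yield the desired edgewise bound directly; the interesting case is $T$ obtuse at local vertex $3$, so that $a_{12}^T>0$ on the unique gray edge.

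For such a $T$ and any fine $t\subset T$ with gray edge $e(t)$, I introduce the splitting
$\mathbf{v}_t^\top (K_t^{\mathrm{mod}} \circ \hat A_t) \mathbf{v}_t
= k_{e(t);\min}\, \mathbf{v}_t^\top \hat A_t \mathbf{v}_t
+ \mathbf{v}_t^\top (R_t \circ \hat A_t) \mathbf{v}_t$
with $R_t := K_t^{\mathrm{mod}} - k_{e(t);\min}\,E$. Since the two non-gray edges of $t$ both lie in $\bar\omega_{e(t);T}$, the definition of $k_{e(t);\min}$ forces $r_{13},r_{23}\ge 0$, and combined with $a_{13}^T,a_{23}^T<0$ their edgewise contributions to the second term are non-negative and may be discarded. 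The only potentially negative contribution is the gray-edge term $-r_{12}\,a_{12}^T(v_1-v_2)^2$, and a direct computation from $k_e^{\mathrm{mod}} = k_{e;\min}\bigl(1+\lambda_{\min}^T/a_{12}^T\bigr)$ in the triggered case, respectively from the negation of \eqref{eqn:MA2} in the non-triggered case, gives the uniform inequality $r_{12}\,a_{12}^T \le k_{e(t);\min}\,\lambda_{\min}^T$.

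To absorb this negative term, I apply the defining property of $\lambda_{\min}^T$, i.e.\ $\mathbf{v}_t^\top \hat A_t \mathbf{v}_t \ge \lambda_{\min}^T \sum_{i<j}(v_i-v_j)^2$. The $(v_1-v_2)^2$-component compensates exactly the gray-edge loss and leaves
$\mathbf{v}_t^\top (K_t^{\mathrm{mod}} \circ \hat A_t) \mathbf{v}_t
\ge k_{e(t);\min}\,\lambda_{\min}^T\bigl[(v_1-v_3)^2+(v_2-v_3)^2\bigr]$.
The obtuseness of $T$ supplies the complementary upper bound
$\mathbf{v}_t^\top \hat A_t \mathbf{v}_t
\le \max(|a_{13}^T|,|a_{23}^T|)\,\bigl[(v_1-v_3)^2+(v_2-v_3)^2\bigr]$,
because the term $-a_{12}^T(v_1-v_2)^2$ in the expansion of $\mathbf{v}_t^\top \hat A_t \mathbf{v}_t$ is non-positive. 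Combining these two produces the desired elementwise bound with $c_T = k_{\min}\lambda_{\min}^T/\max(|a_{13}^T|,|a_{23}^T|)>0$, and summation yields $a_h^{\mathrm{mod}}(v_h,v_h)\ge (\min_T c_T)\,\|\nabla v_h\|_0^2$.

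The main obstacle is that \eqref{eqn:MA2} is calibrated so that equality $r_{12}\,a_{12}^T = k_{e;\min}\lambda_{\min}^T$ holds exactly in the triggered case, so a naive elementwise argument delivers only non-negative definiteness. The resolution is to exploit the \emph{full} generalized-eigenvalue inequality — i.e.\ spend only the $(v_1-v_2)^2$-part of its right-hand side to neutralize the gray-edge term and retain the surplus coming from the two non-gray differences — and then convert that surplus back into an $\hat A_t$-coercivity estimate via the obtuseness-specific upper bound on $\mathbf{v}_t^\top \hat A_t \mathbf{v}_t$. This last step is what makes the argument intrinsically 2D and confined to the gray stencil directions.
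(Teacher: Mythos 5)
Your argument is correct and follows essentially the same route as the paper's proof: an elementwise decomposition of $K_t\circ\hat A_t$ relative to a minimal coefficient value, discarding the nonnegative contributions of the two non-gray edges, bounding the gray-edge deficit by $k_{e;\min}\lambda_{\min}^T$ via the negation of \eqref{eqn:MA2} (respectively the definition of $k_e^{\text{mod}}$), and spending the generalized eigenvalue inequality to retain $k_{e;\min}\lambda_{\min}^T\bigl[(v_1-v_3)^2+(v_2-v_3)^2\bigr]$. The only (harmless) differences are that you split off the patch minimum $k_{e;\min}$ instead of the element minimum $k_{\min}^t$, and that you make explicit, via the bound $\mathbf{v}_t^\top\hat A_T\mathbf{v}_t\le\max(|a_{1,3}^T|,|a_{2,3}^T|)\bigl[(v_1-v_3)^2+(v_2-v_3)^2\bigr]$, the final comparison constant that the paper leaves implicit.
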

\begin{proof} As it holds true for  standard bilinear forms also
our  modified one can be decomposed into element contributions.
 The local stiffness $A_t$ matrix for $t \subset T$ associated with
\eqref{eq:meshdep} reads $A_t= K_t \circ \hat A_t = K_t \circ \hat A_T$
 and can be rewritten in terms of $k_{\min}^t
:= \min_{1 \leq i < j \leq 3} k_{i,j}^t $ and $ \delta k_{i,j}^t := k_{i,j}^t - k_{\min}^t$ as
\begin{align} \small
A_t = k_{\min}^t \hat A_T +  \delta k_{1,2}^t
\begin{pmatrix} -a_{1,2}^T & a_{1,2}^T & 0\\
a_{1,2}^T &-a_{1,2}^T& 0 \\
0 & 0 & 0
\end{pmatrix}+  \delta k_{1,3}^t 
\begin{pmatrix} -a_{1,3}^T & 0 &  a_{1,3}^T \\
0 & 0& 0\\
a_{1,3}^T &0 & -a_{1,3}^T
\end{pmatrix}
+  \delta k_{2,3}^t 
\begin{pmatrix} 0 & 0 & 0\\
0 & -a_{2,3}^T &  a_{2,3}^T \\
0&a_{2,3}^T & -a_{2,3}^T
\end{pmatrix} \label{eq:decomp}
\end{align}
where we use a consistent local node enumeration.
We note that each $\delta k_{i,j}^t \geq 0$. 
Now we consider two cases separately.

First, let $t \subset T$ and $T$ be a macro-element having no obtuse angle then we find  that all four matrices on the right of \eqref{eq:decomp} are  positive semi-definite. Thus, we obtain
$A_t \geq k_{\min}^t \hat A_T $ and no modification is required.

Second, let $t \subset T$ and $T$ be a macro-element having one obtuse angle. Then the second  matrix on the right of  \eqref{eq:decomp} is negative semi-definite while the three other ones are positive semi-definite.
 Now we find
\begin{align*} 
{\mathbf x}^{\top} A_t  {\mathbf x} &\geq k_{\min}^t {\mathbf x}^{\top} \hat A_T {\mathbf x} + {\mathbf x}^{\top} \delta k_{1,2}^t
\begin{pmatrix} -a_{1,2}^T & a_{1,2}^T & 0\\
a_{1,2}^T &-a_{1,2}^T& 0 \\
0 & 0 & 0
\end{pmatrix}{\mathbf x} \\ &\geq \lambda_{\min}^T k_{\min}^t (  (x_1 -x_3)^2+ (x_2-x_3)^2)
+ ( \lambda_{\min}^T k_{\min}^t- a_{1,2}^T \delta k_{1,2}^t )(x_1 - x_2)^2 \\
 &\geq \lambda_{\min}^T k_{e;\min} (  (x_1 -x_3)^2+ (x_2-x_3)^2)
+ ( \lambda_{\min}^T k_{e;\min}- a_{1,2}^T \delta k_{1,2}^t )(x_1 - x_2)^2 ,
\end{align*}
where $e$ is the edge associated with the two local nodes 1 and 2.
Provided \eqref{eqn:MA2} is not satisfied, we have ${\mathbf x}^{\top} A_t  {\mathbf x}\geq \lambda_{\min}^T k_{e;\min} (  (x_1 -x_3)^2+ (x_2-x_3)^2) \geq
c k_{e;\min} {\mathbf x}^{\top} \hat A_T  {\mathbf x}$. If \eqref{eqn:MA2} is
satisfied, we do not work with the bilinear form \eqref{eq:meshdep}
but replace $k_{1,2}^t $ with $k_{e}^{\text{mod}}$. With this modification it is now obvious that the newly defined bilinear form is positive semi-definite on $V_h \times V_h$ and moreover positive definite on $V_h \cap V_0 \times V_h \cap V_0$. The coercivity constant depends only on the shape regularity of the macro-mesh, $\min k$, and the Poincar\'e--Friedrichs constant.
\end{proof}

\begin{remark}
From the proof it is obvious that any other positive scaling factor less or
equal to $k_e^{\text{mod}}$ also preserves the uniform ellipticity.
\end{remark}

\begin{remark}
We can replace in the modification criterion the local condition
\eqref{eqn:MA2} by
$$
l_T 2^{-(\ell+1)} \| \nabla k \|_{L^{\infty}(\omega_{e;T})} a_{1,2}^T \geq \inf_{x \in \bar \omega_{e;T}} k(x) \lambda_{\min}^T ,
$$
where ${\mathcal T}_h$ is obtained by $\ell $ uniform refinement steps from ${\mathcal T}_H$, and $l_T$ is the length of the second longest edge in $T$. Both these criteria allow a local marking of gray type edges which have to be modified.
To avoid computation of $k_{e;\min} $ each time it is needed, and thus further
reducing computational cost, we set the scaling factor for all gray type edges
in a marked $T$ to
\begin{equation}
\frac 12 (k_{\min} (x_1^e) + k_{\min} (x_2^e)), \quad 
\label{eqn:defKmin}
k_{\min} (x_i ) := \min\left\{k(x_j) \,|\, x_j \in \mathcal{N}_T(x_i)\right\}
\end{equation}
where $\mathcal{N}_T(x_i)$ denotes the set of all mesh nodes that are connected to node $x_i$
via an edge and belonging to the macro element $\bar{T}$ including $x_i$ itself. The quantity
$k_{\min}$ can be pre-computed for each node $x_i$ once at the beginning and stored as a node
based vector such as $k$ is. For non-linear problems where $k$ depends on the solution itself,
$k_{\min}$ can be updated directly after the update of $k$.
\end{remark}

The presented pre-asymptotic modification based on (A2) yields a condition on the local
mesh-size and only affects edge types associated with a positive stencil entry.
However, the proof of (A3) shows that a condition on the square of the mesh-size
is basically sufficient to guarantee (A1). This observation allows us to design
an alternative modification  yielding a condition  on the square of the local
mesh-size and involving the Hessian of $k$, except for the elements which are
in $S_h$. However, in contrast to the option discussed before, we possibly
also have to alter entries which are associated with negative reference stencil
entries, and therefore we do not discuss this case in detail. It is obvious that
for piecewise smooth $k$ there exists an $\ell_0$ such that for all refinement
levels $\ell \geq \ell_0$ no local modification has to be applied. This holds
true for both types of modifications. Thus, all the a priori estimates also hold true for our modified versions.
Since we are interested in piecewise moderate variations of $k$ and large scale
computations, i.e., large $\ell$, we assume that we are already in the
asymptotic regime, i.e., that no modification has to be applied for our 3D
numerical test cases, and we do not work out the technical details for the
modifications in 3D.

\subsection{Numerical counter example in the pre-asymptotic regime}
In the case that we are outside the setting of Lemma \ref{lem:pos},
it is easy to come up with an example where the scaled bilinear form is not
positive semi-definite, even for a globally smooth $k$. For a given mesh size $h$,
one can always construct a $k$ with a variation large enough such that the
scaled stiffness matrix has negative eigenvalues.

Here we consider a 2D setting on the unit square with an initial mesh which is not Delaunay; see left part of Fig.~\ref{fig:modelements}.
For the coefficient function $k$, we use a sigmoid function defined as
\begin{equation*}
k(x,y;m,\eta) = \frac{\eta}{1+\exp(-m(y-x-0.2))}+1,
\end{equation*}
with $m=50$ which yields a steep gradient. Further, we vary the magnitude of $k$ by setting $\eta \in \{1,10,100,1000\}$.

Now, we assemble the global stiffness matrix and report in Tab.~\ref{tab:eig} for the different choices of $\eta$ its minimal and maximal eigenvalue over a sequence
of uniform refinement steps. We show the eigenvalues for our scaling approach \eqref{eq:meshdep} with and without modification \eqref{eqn:MA2} and for the standard nodal integration assembly \eqref{eq:standardfe}.
\begin{table} \footnotesize
\caption{Minimal and maximal eigenvalues of the global stiffness matrix $A$. Italic entries highlight application of \eqref{eqn:MA2}.
 \label{tab:eig}} 
\begin{tabular}{r|cccccc|cccccc}
\hline
 \multicolumn{13}{c}{scaling approach} \\
\hline
  & \multicolumn{6}{c|}{$\lambda^A_\text{min}$} & \multicolumn{6}{c}{$\lambda^A_\text{max}$} \\
\hline
$\eta$ \textbackslash \hspace{0.2ex} $\ell$ &  0  &   1  &   2  &   3  &  4  &  5  &   0  &   1  &   2  &   3  &  4  &  5   \\
\hline
1       &     2.7   &    0.72  &   0.18  &   0.046   &    0.011  &    0.0029  &        9.1  &    13.3  &    15.1   &   17.4  &    19.8   &    21.3   \\
10      &     2.8   &    0.89  &   0.24  &   0.063   &    0.016  &    0.0039  &       40.0  &    69.3  &    80.7   &   93.2  &   108   &   116   \\
100     &    -8.9   &  -10.6     &  -5.5   &   0.066   &    0.018  &    0.0045  &        353  &    633  &   739      &  853  &   985   &  1066   \\
1000    &    -128   &   -148     & -94.5   &  -3.0     &    0.019  &    0.0049  &       3486  &    6265  &  7317     & 8450  &  9759   & 10562    \\
\hline
\multicolumn{13}{c}{scaling approach modified with \eqref{eqn:MA2}} \\
\hline
  & \multicolumn{6}{c|}{$\lambda^A_\text{min}$} & \multicolumn{6}{c}{$\lambda^A_\text{max}$} \\
\hline
$\eta$ \textbackslash \hspace{0.2ex} $\ell$ &  0  &   1  &   2  &   3  &  4  &  5  &   0  &   1  &   2  &   3  &  4  &  5   \\
\hline
1       &  2.7            & 0.72        & 0.18          & 0.046         & 0.011         &  0.0029  &    9.1 &   13.3   &   15.1  &    17.4  &    19.8  &     21.3  \\
10      &  \textit{3.8}   & \textit{1.1}& \textit{0.26} & 0.063         & 0.016         &  0.0039  &   \textit{40.8} &   \textit{69.4}   &   \textit{80.7}  &    93.2  &   108  &    116  \\
100     &  \textit{4.0}   & \textit{1.3}& \textit{0.30} & \textit{0.072}& \textit{0.018}&  0.0045  &  \textit{364}   &    \textit{634}   &  \textit{739}    &   \textit{853}    &   \textit{985}  &   1066  \\
1000    &  \textit{4.4}   & \textit{1.3}& \textit{0.33} & \textit{0.078}& \textit{0.019}&  0.0049  & \textit{3598}   &   \textit{6278}   & \textit{7321}    &  \textit{8453}     &  \textit{9759}  &  10562  \\
\hline
 \multicolumn{13}{c}{standard nodal integration} \\
\hline
  & \multicolumn{6}{c|}{$\lambda^A_\text{min}$} & \multicolumn{6}{c}{$\lambda^A_\text{max}$} \\
\hline
$\eta$ \textbackslash \hspace{0.2ex} $\ell$ &  0  &   1  &   2  &   3  &  4  &  5  &   0  &   1  &   2  &   3  &  4  &  5   \\
\hline
1       &    2.8  &       0.72  &     0.18  &   0.046  &   0.011  &   0.0029  &      9.0  &    12.8  &    14.7   &   17.1   &   19.7 &     21.2  \\
10      &    5.7  &       1.3   &     0.27  &   0.065  &   0.016  &   0.0040  &     36.6  &    64.9  &    78.4   &   90.7   &    107 &      116  \\
100     &   27.7  &       1.8   &     0.34  &   0.076  &   0.018  &   0.0045  &      316  &     590  &     717   &   829    &    977 &     1064  \\
1000    &    247  &       1.9   &     0.38  &   0.084  &   0.020  &   0.0049  &     3116  &    5840  &    7104   &  8211    &   9682 &    10549 \\
\hline
\end{tabular}
\end{table}
We find that for $\eta \geqslant 100$ the stiffness matrix of the scaling
approach has negative eigenvalues in the pre-asymptotic regime. In these cases
three or four refinement steps are required, respectively, to enter the asymptotic
regime. However, the positive definiteness of the matrix can be recovered on
coarser resolutions if \eqref{eqn:MA2} is applied. 
Asymptotically, the minimal or maximal eigenvalues of all three approaches tend to the same values.

In Fig.~\ref{fig:modelements}, we illustrate the action of the modification and show how the region of elements where it has to be applied is getting
smaller and finally vanishes with increasing number of refinement steps. 
To further illustrate how the region that is affected by \eqref{eqn:MA2} changes, we show a second example where all initial elements have an obtuse angle.
The underlying color bar represents the coefficient function $k$ with $\eta = 1000$ for the first (left), and $\eta=100$ for the second (right) example.
The location of the steepest gradient of $k$ is marked by a dashed line.
Note that the gradient of $k$ is constant along lines parallel to the dashed one.
For all edges that are modified by \eqref{eqn:MA2} the adjacent elements are shown. 
The color intensity refers to the refinement level and goes from bright (initial mesh) to dark.
\begin{figure}[ht]
\centering
\includegraphics[width=0.425\textwidth]{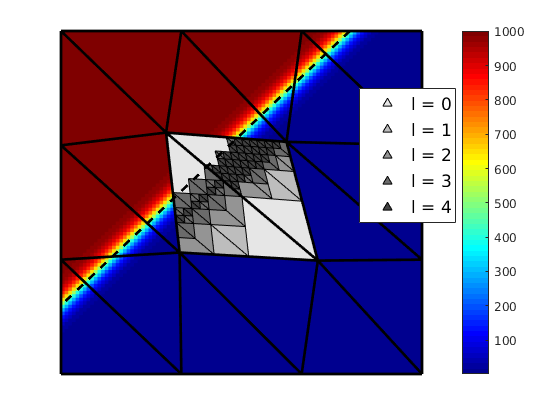}
\hspace*{1.5cm}
\includegraphics[width=0.425\textwidth]{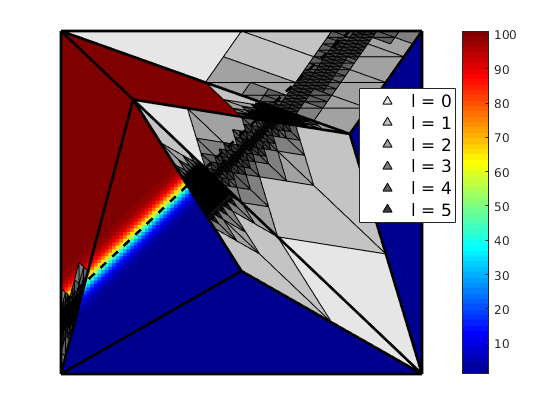}
\caption{Application of \eqref{eqn:MA2}:
  Mesh with two critical macro elements  used for Tab.~\ref{tab:eig} (left) and
  mesh where all macro elements have an obtuse angle (right).}
\label{fig:modelements}
\end{figure}

\subsection{The sign of the stencil entries in 3D}
In contrast to the 2D setting, a macro-mesh with no obtuse angle does not yield
that $\hat A_h$ is an M-matrix. 
Here the uniform refinement rule yields that for each macro element three
sub-classes of tetrahedra (gray, blue, green) exist. To each of these we
associate one interior edge type (gray, blue, green) defined by not
being parallel to any of the six edges of the respective tetrahedron type.
Fig.~\ref{fig:subtypes} shows the sub-classes and as example the gray edge type.
The coloring of the sub-classes is up to now arbitrary. We always associate the gray color with the macro-element and  call the associated interior edge, a gray type edge. The interior edges associated with the blue and green elements are called blue and green type edges, respectively. All other remaining edges are by notation red type edges.
\begin{figure}[ht]
\hfill
\includegraphics[width=0.25\textwidth]{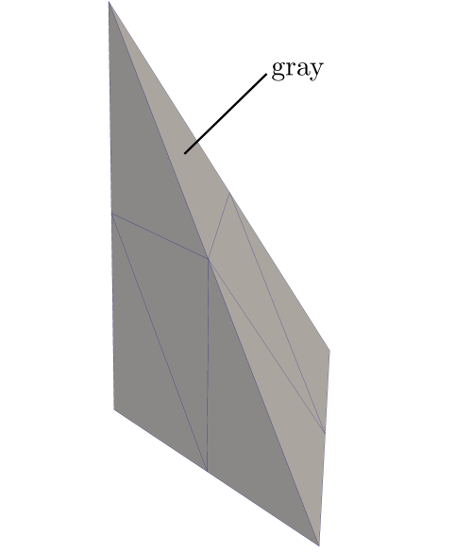}
\hfill
\includegraphics[width=0.25\textwidth]{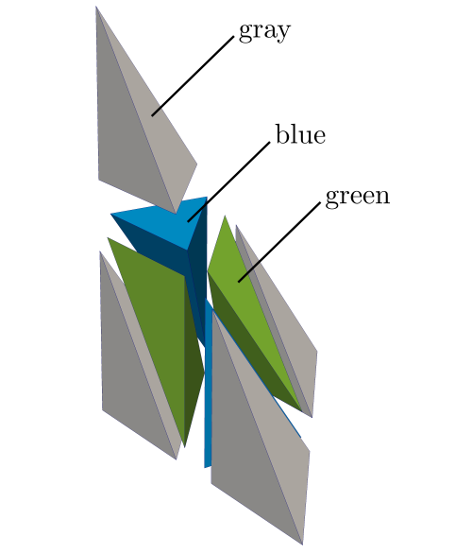}
\hfill
\includegraphics[width=0.32\textwidth]{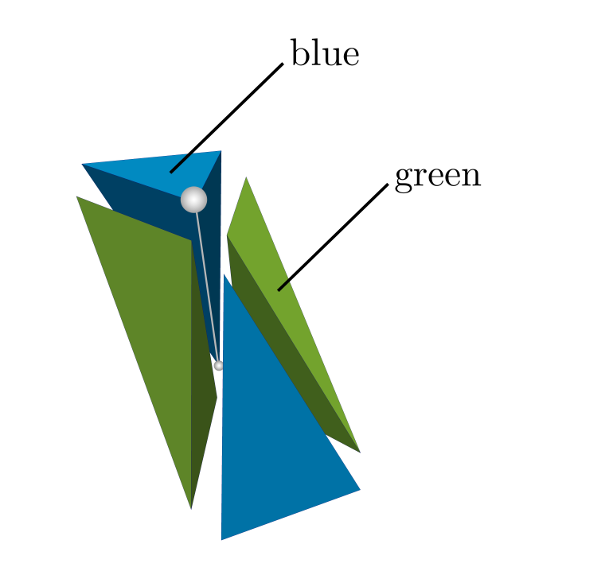}
\hfill
\caption{Uniform refinement of one macro-element $T$ (left) into three
subclasses (middle); gray edge between blue and green sub-tetrahedra (right).
}
\label{fig:subtypes}
\end{figure}
If the macro-element $T$ has no obtuse angle between two faces, then it follows from \cite{KQ95,KKN01} that  the reference stencil entries, i.e., the entries
associated with the Laplace operator,  associated with  gray type
edges  have a positive sign; see Fig.~\ref{fig:stencilsign}. However, it can be shown by some simple geometrical considerations that not both, green and blue type edges, can have a positive sign. If one of these has a positive sign, we call this the blue- and the other one the green-type edge. Conversely, if both have a negative sign, the coloring is arbitrary. In case the macro-element $T$ has no obtuse angle, the sign associated with all red type edges is automatically not positive.
Thus, we find for such elements in our 15-point stencil either two or four positive off-diagonal entries. Then a modification similar to the one proposed for the 2D case can be now applied to the edges of gray and possibly blue  type.
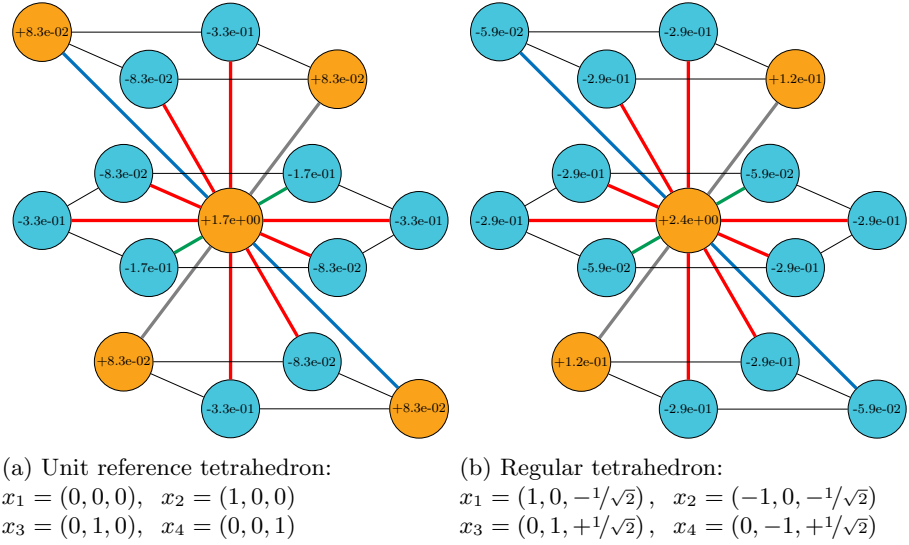
\begin{figure}[ht]
\begin{center}
	\subcaptionbox{
		Unit reference tetrahedron:\\
	$x_1 = (0,0,0),\;\;x_2 = (1,0,0)$\\
	$x_3 = (0,1,0),\;\;x_4 = (0,0,1)$}[0.4\textwidth]{
		\resizebox{5.8cm}{!}{\begin{tikzpicture}[x={(0:1cm)}, y={(90:1cm)}, z={(210:0.5cm)}]
		\tikzset{font={\fontsize{8pt}{12}\selectfont}}
		\begin{scope}[auto, shift={(0,0)}, scale=4,every node/.style={draw,circle,minimum size=3.5em,inner sep=1},node distance=2cm]
		\node[draw,circle,fill=SkyBlue] at (-1,0,0) (mw) {-3.3e-01};
		\node[draw,circle,fill=YellowOrange] at (0,0,0) (mc) {+1.7e+00};
		\node[draw,circle,fill=SkyBlue] at (1,0,0) (me) {-3.3e-01};
		\node[draw,circle,fill=SkyBlue] at (-1,0,-1) (mnw) {-8.3e-02};
		\node[draw,circle,fill=SkyBlue] at (0,0,-1) (mn) {-1.7e-01};
		\node[draw,circle,fill=SkyBlue] at (0,0,1) (ms) {-1.7e-01};
		\node[draw,circle,fill=SkyBlue] at (1,0,1) (mse) {-8.3e-02};
		\node[draw,circle,fill=SkyBlue] at (0,1,1) (ts) {-8.3e-02};
		\node[draw,circle,fill=YellowOrange] at (1,1,1) (tse) {+8.3e-02};
		\node[draw,circle,fill=YellowOrange] at (-1,1,0) (tw) {+8.3e-02};
		\node[draw,circle,fill=SkyBlue] at (0,1,0) (tc) {-3.3e-01};
		\node[draw,circle,fill=SkyBlue] at (0,-1,0) (bc) {-3.3e-01};
		\node[draw,circle,fill=YellowOrange] at (1,-1,0) (be) {+8.3e-02};
		\node[draw,circle,fill=YellowOrange] at (-1,-1,-1) (bnw) {+8.3e-02};
		\node[draw,circle,fill=SkyBlue] at (0,-1,-1) (bn) {-8.3e-02};
		\draw[line width=2pt, red] (mc) -- (me);
		\draw[line width=2pt, red] (mc) -- (mnw);
		\draw[line width=2pt, ForestGreen] (mc) -- (mn);
		\draw[line width=2pt, red] (mc) -- (ts);
		\draw[line width=2pt, gray] (mc) -- (tse);
		\draw[line width=2pt, RoyalBlue] (mc) -- (tw);
		\draw[line width=2pt, red] (mc) -- (tc);
		\draw[line width=2pt, red] (mc) -- (bc);
		\draw[line width=2pt, RoyalBlue] (mc) -- (be);
		\draw[line width=2pt, gray] (mc) -- (bnw);
		\draw[line width=2pt, red] (mc) -- (bn);
		\draw[line width=2pt, ForestGreen] (mc) -- (ms);
		\draw[line width=2pt, red] (mc) -- (mse);
		\draw[line width=2pt, red] (mc) -- (mw);
		\draw (mnw) -- (mw) -- (ms) -- (mse) -- (me) -- (mn) -- (mnw);
		\draw (tc) -- (tw) -- (ts) -- (tse) -- (tc);
		\draw (bc) -- (be) -- (bn) -- (bnw) -- (bc);
		\end{scope}
		\end{tikzpicture}}
	}
	\subcaptionbox{Regular tetrahedron:\\
	$x_1 = \left(1,0,-\nicefrac{1}{\sqrt{2}}\right),\;\;x_2 = \left(-1,0,-\nicefrac{1}{\sqrt{2}}\right)$\\
	$x_3 = \left(0,1,+\nicefrac{1}{\sqrt{2}}\right),\;\;x_4 = \left(0,-1,+\nicefrac{1}{\sqrt{2}}\right)$}[0.4\textwidth]{
		\resizebox{5.8cm}{!}{%
			\begin{tikzpicture}[x={(0:1cm)}, y={(90:1cm)}, z={(210:0.5cm)}]
			\tikzset{font={\fontsize{8pt}{12}\selectfont}}
			\begin{scope}[auto, shift={(0,0)}, scale=4,every node/.style={draw,circle,minimum size=3.5em,inner sep=1},node distance=2cm]
			\node[draw,circle,fill=SkyBlue] at (-1,0,0) (mw) {-2.9e-01};
			\node[draw,circle,fill=YellowOrange] at (0,0,0) (mc) {+2.4e+00};
			\node[draw,circle,fill=SkyBlue] at (1,0,0) (me) {-2.9e-01};
			\node[draw,circle,fill=SkyBlue] at (-1,0,-1) (mnw) {-2.9e-01};
			\node[draw,circle,fill=SkyBlue] at (0,0,-1) (mn) {-5.9e-02};
			\node[draw,circle,fill=SkyBlue] at (0,0,1) (ms) {-5.9e-02};
			\node[draw,circle,fill=SkyBlue] at (1,0,1) (mse) {-2.9e-01};
			\node[draw,circle,fill=SkyBlue] at (0,1,1) (ts) {-2.9e-01};
			\node[draw,circle,fill=YellowOrange] at (1,1,1) (tse) {+1.2e-01};
			\node[draw,circle,fill=SkyBlue] at (-1,1,0) (tw) {-5.9e-02};
			\node[draw,circle,fill=SkyBlue] at (0,1,0) (tc) {-2.9e-01};
			\node[draw,circle,fill=SkyBlue] at (0,-1,0) (bc) {-2.9e-01};
			\node[draw,circle,fill=SkyBlue] at (1,-1,0) (be) {-5.9e-02};
			\node[draw,circle,fill=YellowOrange] at (-1,-1,-1) (bnw) {+1.2e-01};
			\node[draw,circle,fill=SkyBlue] at (0,-1,-1) (bn) {-2.9e-01};
			\draw[line width=2pt, red] (mc) -- (me);
			\draw[line width=2pt, red] (mc) -- (mnw);
			\draw[line width=2pt, ForestGreen] (mc) -- (mn);
			\draw[line width=2pt, red] (mc) -- (ts);
			\draw[line width=2pt, gray] (mc) -- (tse);
			\draw[line width=2pt, RoyalBlue] (mc) -- (tw);
			\draw[line width=2pt, red] (mc) -- (tc);
			\draw[line width=2pt, red] (mc) -- (bc);
			\draw[line width=2pt, RoyalBlue] (mc) -- (be);
			\draw[line width=2pt, gray] (mc) -- (bnw);
			\draw[line width=2pt, red] (mc) -- (bn);
			\draw[line width=2pt, ForestGreen] (mc) -- (ms);
			\draw[line width=2pt, red] (mc) -- (mse);
			\draw[line width=2pt, red] (mc) -- (mw);
			\draw (mnw) -- (mw) -- (ms) -- (mse) -- (me) -- (mn) -- (mnw);
			\draw (tc) -- (tw) -- (ts) -- (tse) -- (tc);
			\draw (bc) -- (be) -- (bn) -- (bnw) -- (bc);
			\end{scope}
			\end{tikzpicture}}
	}
\end{center}
\caption{\label{fig:stencilsign}
Stencil entries colored by their sign at an inner node of two times refined tetrahedra, each without any obtuse angles between faces. On the left~(A), the stencil of the unit reference tetrahedron and on the right~(B), the stencil of a regular tetrahedron is depicted. The gray edge corresponds to the interior edge through $\frac 12 \left(x_1+x_3\right)$ and $\frac 12 \left(x_2+x_4\right)$. The green edge is the one in direction between $x_1 $ and $x_3$ and the blue one in direction  between $x_2$ and $x_4$. All other edge directions are marked in red.}
\end{figure}
The situation can be drastically different if the macro-element $T$ has an obtuse angle between two faces. In that case, we find up to four edge directions which carry a positive sign in the stencil.

\section{Reproduction property and primitive concept}
As already mentioned, 
our goal is to reduce the cost of a WU and the run-times while preserving discretization errors that are
qualitatively and quantitatively on the same level as for standard conforming
finite elements. Here we focus on the 3D case, but similar results can be
obtained for the 2D setting. The a priori bounds of Lemma
\ref{lem:optimalorder} do not necessarily guarantee that for an affine
coefficient function $k$ and affine solution $u$ the error is equal to zero.
In the upper bound \eqref{eq:optord} a term of the form
$\| \nabla k \|_{\infty} \| \nabla u \|_0 $ remains. A closer look at the proof reveals that this non-trivial contribution can be traced back to the terms associated with nodes  on the boundary of a macro-element.
This observation motivates us to introduce a modification of our stencil scaling approach. As already mentioned, all nodes are grouped into primitives and we have easy access to the elements of these primitives.
Recall that $\tilde a_h(\cdot,\cdot)$ defined as in \eqref{eq:standardfe} is associated with the standard finite element approach with nodal quadrature.
Let us by ${\mathcal W}_V$, ${\mathcal W}_E$, and ${\mathcal W}_F$ denote the set of all nodes associated with the vertex, edge, and face primitives, respectively. Now we introduce a modified stencil scaling approach, cf.~\eqref{eq:meshdep} and \eqref{eq:standardfe},
\begin{equation} \label{eq:modst}
  a_{h;{\mathcal I}} (v_h,\phi_i) := \left\{
  \begin{array}{ll}
    \tilde a_h(v_h , \phi_i), \quad & i \in {\mathcal I} \\
    a_h (v_h, \phi_i) & i \not\in {\mathcal I} 
  \end{array}  \right. .
  \end{equation}
Replacing  $a_h (\cdot,\cdot) $ by $a_{h;{\mathcal I}} (\cdot,\cdot)$ with
${\mathcal I} \subset {\mathcal W} :=  {\mathcal W}_V \cup  {\mathcal W}_E
\cup {\mathcal W}_F$ still yields that all node stencils associated with a node in a volume primitive are cheap to assemble. The number of node stencils which  have to be more expensively assembled grow only at most with $4^\ell$ while the total number of nodes grows with $8^\ell$.

\begin{remark}
The modification \eqref{eq:modst} introduces an asymmetry in the definition of
the stiffness matrix to which multigrid solvers 
are not sensitive. Moreover, the asymmetry tends asymptotically to zero with ${\mathcal O}(h^2) $.
\end{remark}

\begin{lemma} \label{lem:repro}
Let ${\mathcal I} =  {\mathcal W}$
or  ${\mathcal I} = {\mathcal W}_V \cup  {\mathcal W}_E $,
then an affine solution can be reproduced if $k$ is affine, i.e., $u_h = u$.
\end{lemma}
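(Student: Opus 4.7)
Starting from an affine solution $u$ and an affine coefficient $k$, the goal is to show $u_h=u$, i.e., that $u$ itself solves the discrete system $a_{h;\mathcal{I}}(u_h,\phi_i)=(f,\phi_i)$ for every interior nodal test function $\phi_i$; since $V_h$ contains affine functions, $u$ is a candidate. The argument splits according to whether $i\in\mathcal{I}$ or $i\notin\mathcal{I}$.

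First I would observe that for affine $k$ the nodal-quadrature form $\tilde a_h$ from \eqref{eq:standardfe} is exact on $V_h\times V_h$: the vertex average $\bar k_t=\frac{1}{d+1}\sum_l k(x_l^t)$ coincides with $k(\bar x_t)=|t|^{-1}\int_t k\,dx$, so $\tilde a_h(v_h,w_h)=a(v_h,w_h)$ for all $v_h,w_h\in V_h$. Since $u$ solves the continuous weak formulation, $\tilde a_h(u,\phi_i)=a(u,\phi_i)=(f,\phi_i)$ for every interior $i$; this settles the case $i\in\mathcal{I}$ directly from $a_{h;\mathcal{I}}(u,\phi_i)=\tilde a_h(u,\phi_i)$.

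For $i\notin\mathcal{I}$ the task reduces to $a_h(u,\phi_i)=\tilde a_h(u,\phi_i)$, for which I would use the algebraic decomposition of $a_h-\tilde a_h$ from the proof of Lemma~\ref{lem:upper}. That decomposition assembles the difference as a sum over $T\in\mathcal{T}_H$, fine edges $e\in\mathcal{E}_h^T$, and tetrahedra $t\in\mathcal{T}_h^{e;T}$; for affine $k$ the Taylor expansion of $k$ about the midpoint $m_e$ terminates at first order, so the zeroth- and first-order cancellation obtained from the point-reflection pairing $t\leftrightarrow t^m$ wipes out the contribution of every edge $e$ that lies in the interior of some macro element. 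Only edges $e\ni i$ lying entirely in $\partial T$ for some $T$ can give a non-zero remainder. For $\mathcal{I}=\mathcal{W}$ the node $i$ is a volume node strictly interior to some $T_0$, so no edge from $i$ is on $\partial T$ for any $T$ and the remainder is empty. For $\mathcal{I}=\mathcal{W}_V\cup\mathcal{W}_E$ volume nodes are disposed of identically, while a face node $i$ strictly interior to a shared macro face $F=\partial T_1\cap\partial T_2$ leaves only the fine edges $e\subset F$ with $i\in e$; since the refinement of $F$ is shared by the two macro tetrahedra, $\mathcal{T}_h^{e;T_1}\cup\mathcal{T}_h^{e;T_2}$ forms the full angular fan about $e$, and I would pair each $t\in\mathcal{T}_h^{e;T_1}$ with the point-reflected partner in $\mathcal{T}_h^{e;T_2}$ about $m_e$ (using $\bar x_{t^m}=2m_e-\bar x_t$ and $(\hat A_{t^m})_{i_{t^m}j_{t^m}}=(\hat A_t)_{i_tj_t}$) to repeat the Lemma~\ref{lem:upper} cancellation across the macro face.

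The hardest step will be exactly this cross-macro pairing, because the Bey refinements of $T_1$ and $T_2$ are carried out independently and the reflection of $t\in T_1$ about $m_e$ need not coincide with a fine tetrahedron in $T_2$. If the naive identification fails, my fall-back is to observe that $a_h(u,\phi_i)=\frac12\int\nabla g_h\cdot\nabla\phi_i$ for the quadratic $g(x)=(\mathbf{b}_k\cdot(x-x_i))(\beta\cdot(x-x_i))$ interpolated as $g_h=I_hg$, while $a(u,\phi_i)=\frac12\int\nabla g\cdot\nabla\phi_i=-(\mathbf{b}_k\cdot\beta)\int_\Omega\phi_i$ by a direct integration by parts. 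Reproduction then reduces to the local identity $\int\nabla(g-g_h)\cdot\nabla\phi_i=0$, which I would verify by summing the face-jump contributions $\jump{\nabla\phi_i\cdot n_F}\int_F(g-I_hg)\,dS$ and invoking the symmetry of the shared refinement on $F$ to cancel them pairwise.
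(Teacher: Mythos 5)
Your reduction of the case $i\in\mathcal{I}$ to the exactness of vertex quadrature for affine $k$, and your treatment of $\mathcal{I}=\mathcal{W}$ (every edge emanating from a volume node is interior to its macro element, so the point-reflection pairing from the proof of Lemma~\ref{lem:upper} applies and the Taylor remainder vanishes identically for affine $k$) are both sound and agree in substance with the paper. The gap is exactly where you place it yourself: for $\mathcal{I}=\mathcal{W}_V\cup\mathcal{W}_E$ the fine edges $e\subset F$ emanating from a face node cannot be handled by point reflection about $m_e$, because the reflected image of a tetrahedron $t\subset T_1$ is in general \emph{not} an element of the independently refined $T_2$; the two Bey refinements agree only on the trace triangulation of the shared face $F$, not in the adjacent half-spaces, and the needed identity $(\hat A_{t^m})_{i_{t^m}j_{t^m}}=(\hat A_t)_{i_tj_t}$ is vacuous if $t^m$ is not a mesh element. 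Your fallback does not close this gap: the reformulation $a_h(u,\phi_i)-\tilde a_h(u,\phi_i)=\tfrac12\int\nabla(I_hg-g)\cdot\nabla\phi_i$ with the quadratic $g$ is correct and rather elegant, but the concluding claim that the face-jump terms $\jump{\nabla\phi_i\cdot n}\int_{F'}(g-I_hg)$ ``cancel pairwise by the symmetry of the shared refinement on $F$'' is left unsubstantiated and points at the wrong symmetry --- the cancellation that actually occurs is not a pairing of fine faces across the macro interface.

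The paper avoids any cross-macro matching. It proves the stronger, purely local statement that for every node $i\notin\mathcal{W}_V\cup\mathcal{W}_E$ the contribution of each single macro element $T$ to $a_h(v_{\text{aff}},\phi_i)$ equals its contribution to $\tilde a_h(v_{\text{aff}},\phi_i)$. For the neighbours $j\in\mathcal{W}$ it pairs each stencil direction $w_j$ with its mirror $w_{j'}=-w_j$ (both directions remain in $\bar T$ when $x_i$ lies in the interior of $T$ or of a face of $T$), uses $(\hat s_{x_i}^T)_j=(\hat s_{x_i}^T)_{j'}$ together with the antisymmetry $v_{\text{aff}}(x_i+w_j)-v_{\text{aff}}(x_i)=-\bigl(v_{\text{aff}}(x_i+w_{j'})-v_{\text{aff}}(x_i)\bigr)$, and converts the resulting difference of edge-averaged coefficients into the difference of barycentric values $k(x_c^t)-k(x_c^{t^s})$ via the \emph{shifted} element $t^s=t-w_j$, see \eqref{eq:shift}; the translate $t^s$ is guaranteed to be a mesh element by the translation invariance of the refinement inside $T$, which is precisely the structural fact your reflection argument lacks across $F$. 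Replacing your cross-face reflection by this per-macro-element mirror-direction/translation argument closes the proof; as written, the decisive case of the lemma remains open.
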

\begin{proof}
  For $i \not\in  {\mathcal W} $, associated with a node in the macro-element $T$
  and $k$ affine, we have
$
k_{ij} (\hat s_{i}^T)_j = (s_{i}^T)_j$,
and thus the bilinear form $a_{h; {\mathcal I}}  (\cdot,\cdot)$ is identical with $\tilde a_h(\cdot,\cdot)$ for ${\mathcal I} = {\mathcal W}$. Since then no variational crime occurs, we find $ u_h =u$ for any affine solution $u$.

The case ${\mathcal I}= {\mathcal W}_V \cup  {\mathcal W}_E$ is more involved.
Here the bilinear forms $a_{h; {\mathcal I}}  (\cdot,\cdot)$  and  $\tilde a_h(\cdot,\cdot)$ are not identical. However, it can be shown that for any affine function $v_{\text{aff}}$ we find
$$
a_{h; {\mathcal I}}  (v_{\text{aff}},\cdot) = \tilde a_h(v_{\text{aff}},\cdot)$$
on $V_h \cap V_0$. To see this, we follow similar arguments as in the proof of Lemma \ref{lem:upper}. This time we do not use a point reflected element $t^m$
but
a  shifted element $t^s$; see Fig. \ref{fig:shift}. We note that $\nabla v_{\text{aff}} $ is constant and
\begin{align} \label{eq:shift}
\frac 12 (k(x_i) + k(x_i + w_j)) - \frac 12 (k(x_i) +k(x_i + w_{j'})) =
k (x_c^t) - k(x_c^{t^s}) .
\end{align}
Here $x_c^t $ and $x_c^{t^s}$ stand for the barycenter of the elements $t$ and $t^s$, respectively. Two of the four vertices of element $t$ are the nodes $x_i$ and $x_i + w_j$. The shifted element $t^s$ is obtained from $t$ by a shift of $- w_j$.

\begin{figure}[h]
\subcaptionbox{Element $t$ and  shifted one $t^s$}[0.35\textwidth]{
\includegraphics[width=0.18\textwidth]{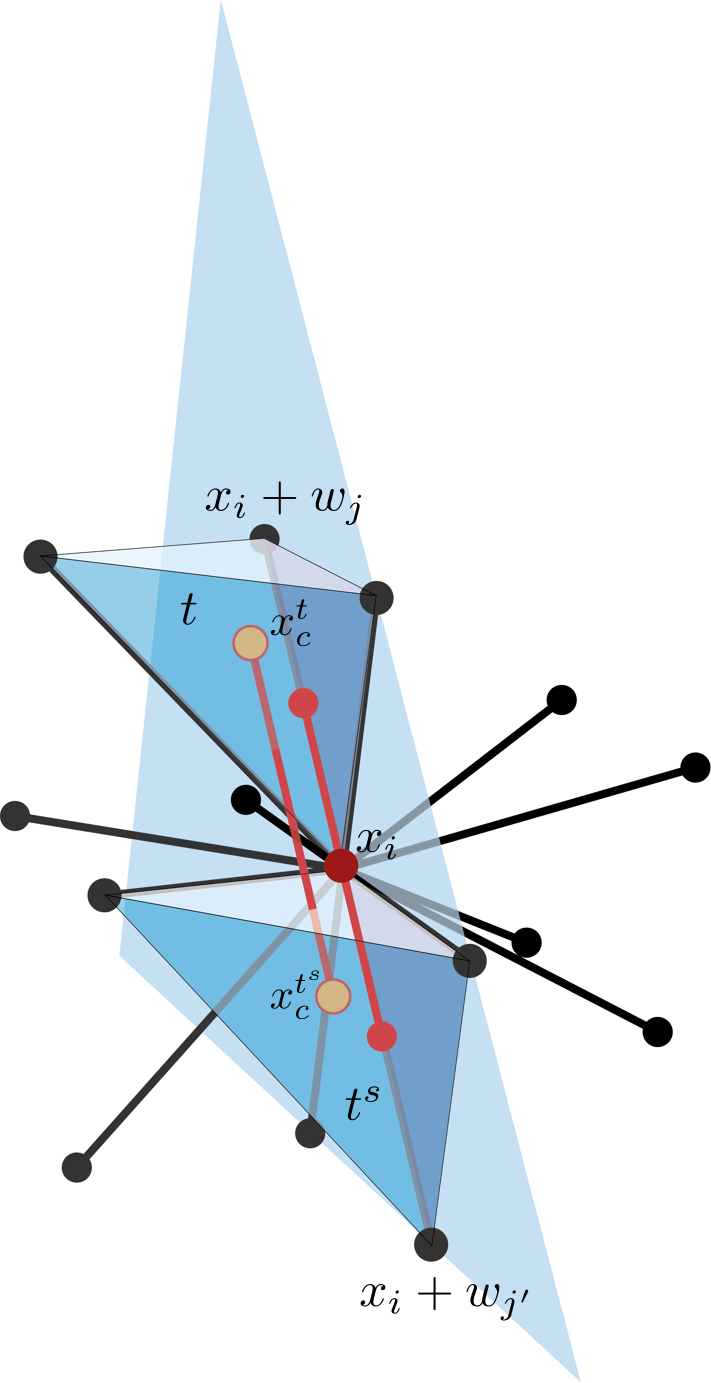}}
\subcaptionbox{Reproduction property}[0.625\textwidth]{
\begin{tabular}{l|c}
\hline
\mbox{ }\\[-0.25cm]
Used bilinear form  to    & Discretization error\\
define FE approximation & in discrete $L^2$-norm \\
\mbox{  }\\[-0.25cm]
\hline
& \\[-0.25cm]
$\tilde a_h(\cdot,\cdot)   $  standard FE    &  3.85104e-15\\
 $a_{h;{\mathcal I}} (\cdot,\cdot)   $, ${\mathcal I} = {\mathcal W} $ &  2.10913e-15\\
    $a_{h;{\mathcal I}} (\cdot,\cdot)   $, ${\mathcal I} = {\mathcal W}_V \cup
{\mathcal W}_E  $            &  1.75099e-15\\
      $a_{h;{\mathcal I}} (\cdot,\cdot)   $, ${\mathcal I} = {\mathcal W}_V  \cup{\mathcal W}_F $         &  6.82596e-04\\
     $a_{h;{\mathcal I}} (\cdot,\cdot)   $, ${\mathcal I} = {\mathcal W}_E \cup{\mathcal W}_F $        &  9.19145e-07\\
   $a_{h;{\mathcal I}} (\cdot,\cdot)   $, ${\mathcal I} = {\mathcal W}_V  $      &  6.82032e-04\\
   $a_{h;{\mathcal I}} (\cdot,\cdot)   $, ${\mathcal I} = 
{\mathcal W}_E  $    &  9.17523e-07\\
$a_h(\cdot,\cdot)$ stencil scaling &  6.81942e-04\\
& \\[-0.25cm] \hline
\end{tabular}}
\caption{Illustration of the effect of the hierarchical mesh structure.}
\label{fig:shift}
\end{figure}

This observation yields for a node $i \not\in {\mathcal W}_V \cup  {\mathcal W}_E$
that the sum over the nodes can be split into two parts
\begin{align*}
  a_{h;{\mathcal I}} (v_{\text{aff}}, \phi_i) = a_h (v_{\text{aff}}, \phi_i) &=
    \frac 12 \sum_{ j \in {\mathcal W}} (k(x_j) + k(x_i)) (v_{\text{aff}} (x_j) - v_{\text{aff}} (x_i))
    (\hat s_{x_i}^T)_j \\ & +  \frac 12 \sum_{j \not\in {\mathcal W}} (k(x_j) + k(x_i)) (v_{\text{aff}} (x_j) - v_{\text{aff}} (x_i))
    (\hat s_{x_i}^T)_j.
\end{align*}
For the second sum on the right, we have already shown equality to the corresponding term in the bilinear form $\tilde a_h (v_{\text{aff}}, \phi_i)$.
We recall that each node $j$ in ${\mathcal W} $ such that $x_i$ and $x_j$ form an edge  has a point mirrored node $x_{j'}$, i.e., $x_i = 0.5(x_j + x_{j'})$.
For the first term on the right, we find that for a node $j \in {\mathcal W}$
 it holds $ (\hat s_{x_i}^T)_j =  (\hat s_{x_i}^T)_{j'}$.  In terms of 
$v_{\text{aff}} (x_i+ w_j) - v_{\text{aff}} (x_i) = v_{\text{aff}} (x_i) - v_{\text{aff}} (x_i+ w_{j'})$,
the first summand on the right can thus be further simplified to
\begin{align*}
  \frac 12 \sum_{j=1 }^{n_i} \left(
\frac 12 ( k(x_i + w_j) + k(x_i)  - (k(x_i + w_{j'}) + k(x_i))) (v_{\text{aff}} (x_i+ w_j) - v_{\text{aff}} (x_i)) \right)
  (\hat s_{x_i}^T)_j .
\end{align*}
Together with \eqref{eq:shift}, this yields the stated equality.
\end{proof}

To illustrate Lemma \ref{lem:repro}, we consider a 3D example on the unit cube
 $\Omega = (0,1)^3$. 
We use as solution $u(x,y,z) = -7x +y +3z$ and  $
k(x,y,z) = 2x+3y+5z+1 $ for the coefficient function.
The standard finite element solution reproduces the exact solution up to machine precision.
We test the influence of the stencil scaling approach on different sets of primitives.
In the right of Fig.~\ref{fig:shift}, we report the discretization error in the discrete $L^2$-norm for different combinations.
Note that the macro triangulation of the unit cube consists of 12 tetrahedral elements and of one non-boundary macro vertex 
at $(0.5,0.5,0.5)$.
These considerations show that the choice ${\mathcal I} = {\mathcal W}_V \cup {\mathcal W}_E $ is quite attractive. The number of stencils which have to be expensively evaluated grows only with $2^\ell$ while we still can guarantee the reproduction property for an affine solution in the case of an affine coefficient function.

\section{Cost of a work unit}
\label{sec:cost}
The stencil scaling has been introduced 
as means to reduce the cost of a WU and thus help to design less expensive and thus more efficient 
PDE solvers.
We will 
\changed{first employ}
a cost metric based
on operation count. 
While we are aware that
real run-times will be influenced 
by many additional factors, including e.g.~the quality of the implementation,
compiler settings, and various hardware details, the classic measure still provides
useful insight.
\changed{Another important aspect that we will address here is the
question of memory accesses.}
A \changed{more technical} hardware-aware performance analysis, as conducted in \cite{gmeiner2015towards}
to evaluate the efficiency on real computers, is
beyond our current scope. 
%
%
Since asymptotically the contributions from the element primitives dominate the cost,
we restrict ourselves to the study of stencils for nodes located in the interior of a macro element $T$.
\subsection{Cost for stencil scaling}
Let $\hat{s}^T_i$ be the 15-point stencil associated with the Laplacian at
an inner node $i$ of $ T$ which is independent of the node location within $T$.
Recall that the scaled stencil is given by
%
%
%
%
\begin{equation*}
s_{ij}^{T} := \frac{1}{2}\Big(\kappa(x_i) + \kappa(x_j)\Big)\hat{s}_{ij}^{T}
\enspace,\quad
\forall j\in\mathcal{N}_T(i)\setminus{\{i\}}
\enspace,\quad
s_{ii}^{T} := - \sum_{j\in\mathcal{N}_T(i)\setminus\{i\}} s_{ij}^{T}
\end{equation*}
where $\kappa(x_i)$ represents either the value of the parameter $k$ at node
$x_i$ or a modified version of it as suggested in \eqref{eqn:defKmin}. Note
that this definition is the direct translation of the approach described
in Sec.~\ref{subsec:stencilScaling} from the bilinear form to the pure stencil.
Assuming that the constant $\nicefrac{1}{2}$ factor is incorporated into the
stencil $\hat{s}_{i}^{T}$ at setup and taking into account the number of
edges emanating from $x_i$, being six in 2D and 14 in 3D, we can compute the
non-central stencil entries with 12 and 28 operations, respectively. 
The computation of the central entry via the zero row-sum property takes 5 and 13 additions,
respectively.
This is summarized in Tab.~\ref{tab:costStencilAssembly}.

We next compare the cost for the stencil scaling approach to an on-the-fly
computation based on classic FEM techniques which, however, exploits the
advantages of hierarchical hybrid grids, see also \cite{Gmeiner:2013:PhD}.

\begin{remark}
The cost to approximate the stencil coefficients with the two-scale interpolation method of
\cite{Bauer:2017:Two-Scale} amounts to $q$ operations per stencil entry 
when the polynomial degree is chosen as $q$.
When the coefficients are locally smooth, $q=2$ delivers good results
in \cite{Bauer:2017:Two-Scale}.
The cost of this method becomes
$2 \times 7=14$ operations in 2D, and $2 \times 15=30$ operations in 3D, respectively.
This is slightly cheaper than the stencil scaling of this paper,
but it neglects the setup cost that is necessary to construct the polynomials
and completely ignores the fact that for a fixed macro mesh size no asymptotic optimality can be achieved. 
\end{remark}

\subsection{Cost of the on-the-fly computation for the classical FEM}
Employing the bilinear form \eqref{eq:standardfe} we can write the stencil
coefficients as
\begin{equation}
\label{eqn:onTheFlyStencil}
\tilde{s}_{ij}^T = \tilde{a}_h ( \phi_j,\phi_i)
= \sum_{t\in\mathcal{T}_h^{e;T}} \sum_{\ell=1}^{d+1} \frac{k(x_\ell^t)}{(d+1)}
   \nabla \phi_{j_t} \cdot \nabla \phi_{i_t} |t|
= \sum_{t\in\mathcal{T}_h^{e;T}} (E^t)_{i_t,j_t} \sum_{\ell=1}^{d+1} k(x_\ell^t)
\end{equation}
where
\begin{equation*}
(E^t)_{i_t,j_t} := \frac{1}{(d+1)} \integrald{t}{}{\nabla \phi_{j_t} \cdot \nabla \phi_{i_t}}{x}
\end{equation*}
denotes the local stiffness matrix including the volume averaging factor
$\nicefrac{1}{(d+1)}$. The
regularity of the mesh inside a macro element implies that there exist only
a fixed number of differently shaped elements $t$. Thus, we can compute stencil entries
on-the-fly from one (2D) or three (3D) pre-computed stiffness matrices and the
node-based coefficient values%
\footnote{Note that in our HHG implementation, we use six element-matrices as
this is advantageous with respect to local to global indexing; see
\cite{Bergen:2006:SCS} for details.}.
In 2D, always six elements $t$ are attached to an interior node. Summing nodal values of
$k$ for each element first would require 12 operations. Together with the
fact that two elements are attached to each edge, computing all six non-central
stencil entries via \eqref{eqn:onTheFlyStencil} would then require a total of
30 operations. This number can be reduced by 
eliminating common sub-expressions. 
We pre-compute the values
\begin{equation*}
k(x_i) + k(j_2)\enspace,\quad
k(x_i) + k(j_4)\enspace,\quad
k(x_i) + k(j_6)\enspace,
\end{equation*}
i.e.,~we sum $k$ for the vertices at the ends of the edges marked \changed{as dashed} in
Fig.~\ref{fig:2DsubExpr}. 
Then, we obtain $\sum_{\ell=1}^3 k(x_\ell^t)$ by adding the value of $k$ at the
third vertex of $t$ to the pre-computed expression.
In this fashion, we can compute all six sums with 9 operations and obtain a
total of 27 operations for the non-central entries.
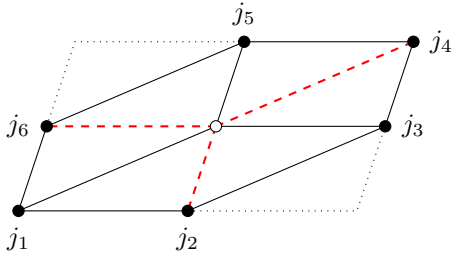
\begin{figure}[t]\centering
\begin{tikzpicture}[scale=0.75]
\draw (0.0,0.0) -- (3.0,0.0) -- (6.5,1.5) -- (7.0,3.0) --
      (4.0,3.0) -- (0.5,1.5) -- cycle;
\draw (3.5,1.5) -- (0.0,0.0);
\draw (3.5,1.5) -- (6.5,1.5);
\draw (3.5,1.5) -- (4.0,3.0);
\draw[thick,draw=red,dashed] (3.5,1.5) -- (7.0,3.0);
\draw[thick,draw=red,dashed] (3.5,1.5) -- (3.0,0.0);
\draw[thick,draw=red,dashed] (3.5,1.5) -- (0.5,1.5);
\draw[dotted] (3.0,0.0) -- (6.0,0.0) -- (6.5,1.5);
\draw[dotted] (0.5,1.5) -- (1.0,3.0) -- (4.0,3.0);
\filldraw[draw=black,fill=white] (3.5,1.5) circle (0.1);
\filldraw[draw=black,fill=black] (0.0,0.0) circle (0.1);
\filldraw[draw=black,fill=black] (3.0,0.0) circle (0.1);
\filldraw[draw=black,fill=black] (6.5,1.5) circle (0.1);
\filldraw[draw=black,fill=black] (7.0,3.0) circle (0.1);
\filldraw[draw=black,fill=black] (4.0,3.0) circle (0.1);
\filldraw[draw=black,fill=black] (0.5,1.5) circle (0.1);
%
\draw (0.0,-0.5) node {$j_1$};
\draw (3.0,-0.5) node {$j_2$};
\draw (7.0, 1.5) node {$j_3$};
\draw (7.5, 3.0) node {$j_4$};
\draw (4.0, 3.5) node {$j_5$};
\draw (0.0, 1.5) node {$j_6$};
\end{tikzpicture}
\caption{Neighborhood of node $i$ and edges [red/dashed] selected for common
sub-expression elimination.\label{fig:2DsubExpr}}
\end{figure}

The situation in 3D is 
more complicated. This stems mainly from the
fact that the number of elements sharing the edge from node $i$ to node $j$
is no longer a single value as in 2D. Instead we have in 3D that
$|\mathcal{T}_h^{e;T}|=4$ for the gray, blue, and green type edges that emanate from node $i$,
while $|\mathcal{T}_h^{e;T}|=6$ for the remaining 8 red type edges. Thus, 
for the computation of the non-central stencil entries we obtain the cost
\begin{equation*}
6 ( 4 + 3 ) + 8 ( 6 + 5 ) + 40 = 170
\enspace.
\end{equation*}
The \changed{summand} 40 represents the operations required for summing the nodal $k$
values employing again elimination of common sub-expressions; see
\cite{Gmeiner:2013:PhD} for details.
Assembling the central entry involves contributions from 6 elements in 2D and
24 in 3D. Thus, executing this via \eqref{eqn:onTheFlyStencil} cannot require less
operations than using the row-sum property.
%
%
\subsection{Cost of stencil application}
The cost of the application of the assembled stencil is independent of the 
approach to compute the stencil.
Using the first expression in
\begin{equation*}
s_{i}^Tv = \sum_{j\in\mathcal{N}_T(i)} s^T_{ij}v(x_j)
        = \sum_{j\in\mathcal{N}_T(i)\setminus{\{i\}}}{s}^T_{ij} (v(x_j) - v(x_i))
\end{equation*}
results in 13 operations in 2D and 29 in 3D. Exploiting the row-sum property
avoids computing the central entry, but increases the cost for the stencil
application, cf. second expression, so that in total only a single operation is
saved. Tab.~\ref{tab:costStencilAssembly} summarizes the results of this section.
We observe that assembling the stencil and applying it once in the scaling
approach requires \changed{in 2D only about \nicefrac{2}{3} and in 3D about \nicefrac{1}{3}} of the operations compared to a classical
on-the-fly variant.
%
%
\begin{table}[htb]\centering\footnotesize
\begin{tabular}{*{5}{c}}
\hline\noalign{\smallskip}
approach & dimension & non-central entries & central entry
& assembly + application\\
\noalign{\smallskip}\hline\noalign{\smallskip}
\multirow{2}{*}{stencil scaling} & 2D &  6 add /  6 mult &  5 add / 0 mult &  17 add / 12 mult \\
                                 & 3D & 14 add / 14 mult & 13 add / 0 mult &  41 add / 28 mult \\
\noalign{\smallskip}\hline\noalign{\smallskip}
\multirow{2}{*}{on-the-fly FEM}  & 2D & 15 add / 12 mult &  5 add / 0 mult &  26 add / 18 mult \\
                                 & 3D & 98 add / 72 mult & 13 add / 0 mult & 125 add / 86 mult \\
\noalign{\smallskip}\hline\noalign{\smallskip}
\end{tabular}
\caption{Comparison of operation count for assembling and applying the stencil
using either stencil scaling or on-the-fly assembly.}
\label{tab:costStencilAssembly}
\end{table}

In 3D, the stencil scaling approach results in a total of 69 operations per node,
and is thus only $2.4$ times more expensive than the constant-coefficient case
treated in e.g.~\cite{Bergen:2006:SCS}.
The 69 operations save a factor of 3 compared to the 211 operations per node for the conventional,
yet highly optimized on-the-fly assembly and roughly an order of magnitude when compared with
unoptimized variants of on-the-fly-assembly techniques.

To which extent a factor three savings in floating point operations
will be reflected in run-time (or other practical cost metrics, such as e.g.,~energy consumption),
depends on many details of the hardware and system software. 
For instance, we note that in the above algorithms, the
addition and multiplication operations are not ideally balanced and they do not always 
occur such that the fused multiply-add operations of a modern processor architecture
can be used. Furthermore, advanced optimizing
compilers will restructure the loops and will attempt to find a scheduling
of the instruction stream that avoids dependencies.  
Thus, in effect, the number of operations executed on the processor
may not be identical to the number of operations calculated
from the abstract algorithm or the high level source code.
Nevertheless, though the operation count does not permit a precise prediction of the run-times, we will see in the following that our effort to reduce the number of operations
pays off in terms of accelerated execution.

\changed{
\subsection{Memory accesses}
\label{subsec:memcost}

One other key aspect governing the performance of any algorithm is 
the number of read and write operations it needs to perform 
and their pattern with respect to spatial and temporal locality.
The influence of these properties results
from the large disparity between peak floating point performance of modern
CPUs and the latency and bandwidth 
limitation of memory access.
All modern architectures employ a hierarchy of caches \cite{Comer:2005:PPH,Hager:2011:CRC}
that helps accellerate memory access but that also make an a-priori 
prediction of run-times quite difficult

As in the previous subsections for the number of operations, we will perform here a
high-level analysis of memory traffic, before presenting experimental results in
Sec.~\ref{subsec:roofline}. 
As a baseline we are including in
our consideration not only the on-the-fly and the stencil scaling approach,
but also a \emph{stored stencil} version.
For the latter we assume that in a
setup phase the stencil for each node was assembled, by whatever method, and
then stored.
For the stencil application the weights of the local stencil are then loaded
from main memory and applied to the associated degrees of freedom.
Since the stencil weights correspond to the non-zero matrix entries of the
corresponding row, this would be the HHG analogue of performing a sparse
matrix-vector multiplication with a matrix stored in a standard sparse storage
format, such as e.g.~Compressed Row Storage~(CRS).
Note, however, that due to the structuredness of the mesh inside a volume
primitive, less organizational overhead and indirections are required than
for a sparse matrix format. Most importantly we do neither need to store nor
transfer over the memory system any information on the position of the non-zero
matrix entries.

For this article we do not study the full
details of algorithmic optimization for best usage of the memory sub-system.
Instead, we are going to compare two idealized cases. These are the
\emph{optimistic} version, in which we assume perfect re-use of each data
item loaded to the caches and a \emph{pessimistic} version, where there is
no re-use at all.
Any actual implementation will lie somewhere in between these two cases
and the closeness to one of them being determined by algorithmic properties and
the quality of its implementation. Furthermore, we will only consider the 3D
problem.

Common to all three approaches under consideration is that, in order to apply
the local stencil and compute the residuum at a node, they need to load the DOFs at the node and its
14 neighbors and the value of the right-hand side at the node
itself. After the stencil application, the resulting nodal value must be written
back. Thus, we are not going to inspect these parts of the stencil application
and also neglect questions of write-back strategies for the caches.

With respect to the stencil weights the situation is, of course, a different
one. Let us start with the \emph{stored stencil} approach. We denote by $N$ the
number
of DOFs inside a single volume primitive. In 3D a scalar operator using our discretization is
represented by a 15-point-stencil. This structure is invariant inside the
volume primitive due to the regular mesh structure. Hence, we obtain for the
total number of data items to be loaded from memory
$\mathcal{N}_{\text{stored}} = 15N$.

In the on-the-fly assembly we start by loading the six pre-computed element
matrices, which are 
in $\RR^{4\times 4}$ for the 3D case. Note that these are loaded only once when
the first node of the volume primitive is treated and can stay in the L1
cache during the complete loop over the volume primitive as they only
occupy 768 bytes.
Assembling the local stencil from these matrices requires information on
the coefficient function $k$. Assuming perfect re-use of the nodal $k$
information, i.e.~each nodal value needs to be loaded only once, we obtain
$\mathcal{N}^{\text{opt}}_{\text{fly}} = 6\times (4\times 4) + N = 96 + N$. In the
pessimistic case, where we assume absolutely no cache effects, we need to
reload neighboring $k$ values each time we update another node. This gives us
$15N$ load operations per node resulting in 
$\mathcal{N}^{\text{pes}}_{\text{fly}} = 96 + 15N$.

Finally we consider the stencil scaling approach. Here it is sufficient to load
once the 14 non-central weights of the reference stencil $\hat{s}$ for the
volume primitive%
\footnote{The current HHG implementation for technical reasons assembles the
stencil from the pre-computed element matrices also in this case, so the
constant term in the memory access is the same 768 bytes as for the
on-the-fly approach.}.
These are 112 bytes and the values can, as in the on-the-fly
approach, remain in the L1 cache. The non-central weights are then scaled
depending on the neighboring $k$ values, while its central weight is derived
using the zero-sum property.
This gives us $\mathcal{N}^{\text{opt}}_{scale} = 14 + N$ and 
$\mathcal{N}^{\text{pes}}_{scale} = 14 + 15N$.
\begin{table}[!t]\footnotesize
\centering
\caption{\changed{Cost of the three different approaches measured in total bytes loaded
for computation of stencil weights, assuming the use of the IEEE binary64
data-type.}}\label{tab:LoadValues}
\changed{\begin{tabular}{lcc}
\hline\noalign{\smallskip}
approach & optimistic & pessimistic \\
\noalign{\smallskip}\hline\noalign{\smallskip}
stored stencils  &   $120N$ &     $120N$ \\
on-the-fly FEM   & $768+8N$ & $768+120N$ \\
stencil scaling  & $112+8N$ & $112+120N$ \\
\noalign{\smallskip}\hline
\end{tabular}}
\end{table}

Table \ref{tab:LoadValues} sums up our results. Comparing the entries for the
stored stencil approach to the pessimistic bounds for the other two approaches
we find the same pre-factors for $N$ plus small constant terms. One should
note, however,
that in the case of the stored stencils approach there will be no temporal
cache effects when we proceed from one node to the next, since the stencil
values/row entries cannot be re-used. In the two other approaches, we expect
to see positive cache effects due to temporal and spatial re-use of some
values of the coefficient function $k$, i.e.~results closer to
$\mathcal{N}^{\text{opt}}$. This can be seen in the comparisons
in Sec.~\ref{subsec:roofline}.

%
}
\section{Numerical accuracy study and run-time comparison}
\label{sec:num}
In this section, we provide different numerical results which illustrate the accuracy and run-time of the new scaling approach in comparison to the element-wise
finite element assembling based on nodal integration within a matrix free framework.
We consider different cases such as scalar and tensorial coefficient functions $k$ and the scenario of a geometry mapping.
Throughout this section, we denote the time-to-solution by tts and by relative
tts always mean the ratio of the time-to-solution of the stencil scaling
approach with respect to the nodal integration.
From our theoretical considerations for one stencil application from Tab.~\ref{tab:costStencilAssembly}, we expect a relative tts of roughly one third.
Further, we denote the estimated order of convergence by eoc and the asymptotic convergence rates of the multigrid solver by $\rho$ defined as $\rho = \left(r^{(i^\ast)}/r^{(5)}\right)^{\nicefrac{1}{i^\ast-5}}$ where $r^{(i)}$ is the $L^2$ residual at iteration $i$, and $i^\ast$ the final iteration of the solver.
Each of the following 3D computations was conducted on SuperMUC Phase 2 using the Intel 17.0 compiler together with the Intel 2017 MPI library. For all runs, we specify the compiler flags \texttt{-O3 -march=native -xHost}. Note that the serial runs using only a single compute core are not limited to run on large machines like SuperMUC but can also be run on usual modern desktop workstations with enough memory. The peak memory usage by our largest serial run was at about 4.46\,GiB.

\subsection{A quantitative comparison in 2D for a scalar permeability}
In this example, we consider as domain the unit-square $\Omega=(0,1)^2$ and use a non-polynomial manufactured solution
\begin{equation*}
u(x,y) = \frac{x^4y}{xy + 1}
\enspace.
\end{equation*}
We employ as coefficient function
	$k(x,y;m) = 2+\sin(m\pi x) \sin(m\pi y)$ with $m\in \{2,4,8\}$.
The right-hand side is
computed by inserting the above definitions into the equation.
This construction has the advantage that we can
study the effect of the magnitude of $\nabla k$ in a systematic fashion by
adjusting $m$.
We perform a study on a regular triangular mesh comparing errors of the
discrete solutions obtained by two standard Galerkin finite element approaches
and our proposed scaling approach.
\changed{The error norms are approximated using a 5th order quadrature
rule, while we use a 2nd order scheme to evaluate the weak right-hand side.}
Results are listed in Tab.~\ref{tab:2d-regular-mesh}.
Here, $u_h^{\rm n}$ denotes the approximation obtained by employing a nodal quadrature rule resulting in bilinear form \eqref{eq:standardfe},
while $u_h^{\rm b}$ uses a quadrature rule that evaluates $k$ at the triangle's barycenter.
{\setlength{\tabcolsep}{.4em}
\begin{table}[!b] \small
\centering\footnotesize
\begin{tabular}{c|c|c|c|c|c|c|c|c|c|c|c|c}
\hline
&\multicolumn{4}{c|}{midpoint integration} & \multicolumn{4}{c|}{nodal integration} & \multicolumn{4}{c}{stencil-approach}\\
\hline
\rule[-1.5mm]{0pt}{4.75mm}$L$ & $\|u - u_h^{\rm b}\|_0$ & eoc & $|u - u_h^{\rm b}|_1$ & eoc & $\|u - u_h^{\rm n}\|_0$ & eoc & $|u - u_h^{\rm n}|_1$ & eoc & $\|u - u_h^{\rm s}\|_0$ & eoc & $|u - u_h^{\rm s}|_1$ & eoc \\
\hline
\multicolumn{13}{c}{$m=2$}\\
\hline
	0 & 3.75e-03 & 0.00 & 9.13e-02 & 0.00 & 3.25e-03 & 0.00 & 9.15e-02 & 0.00 & 3.34e-03 & 0.00 & 9.14e-02 & 0.00\\
	1 & 9.61e-04 & 1.96 & 4.60e-02 & 0.99 & 8.59e-04 & 1.92 & 4.61e-02 & 0.99 & 8.65e-04 & 1.95 & 4.61e-02 & 0.99\\
	2 & 2.42e-04 & 1.99 & 2.31e-02 & 1.00 & 2.18e-04 & 1.97 & 2.31e-02 & 1.00 & 2.18e-04 & 1.98 & 2.31e-02 & 1.00\\
	3 & 6.06e-05 & 2.00 & 1.15e-02 & 1.00 & 5.50e-05 & 1.99 & 1.15e-02 & 1.00 & 5.48e-05 & 2.00 & 1.15e-02 & 1.00\\
	4 & 1.51e-05 & 2.00 & 5.77e-03 & 1.00 & 1.37e-05 & 2.00 & 5.78e-03 & 1.00 & 1.37e-05 & 2.00 & 5.78e-03 & 1.00\\
\hline
\multicolumn{13}{c}{$m=4$}\\
\hline
	0 & 3.61e-03 & 0.00 & 9.25e-02 & 0.00 & 3.64e-03 & 0.00 & 9.32e-02 & 0.00 & 3.54e-03 & 0.00 & 9.36e-02 & 0.00\\
	1 & 9.14e-04 & 1.98 & 4.62e-02 & 1.00 & 1.02e-03 & 1.83 & 4.70e-02 & 0.99 & 9.40e-04 & 1.91 & 4.67e-02 & 1.00\\
	2 & 2.30e-04 & 1.99 & 2.31e-02 & 1.00 & 2.81e-04 & 1.87 & 2.33e-02 & 1.02 & 2.43e-04 & 1.95 & 2.31e-02 & 1.01\\
	3 & 5.77e-05 & 2.00 & 1.15e-02 & 1.00 & 7.27e-05 & 1.95 & 1.15e-02 & 1.01 & 6.13e-05 & 1.98 & 1.15e-02 & 1.00\\
	4 & 1.44e-05 & 2.00 & 5.78e-03 & 1.00 & 1.83e-05 & 1.99 & 5.78e-03 & 1.00 & 1.53e-05 & 2.00 & 5.78e-03 & 1.00\\
\hline
\multicolumn{13}{c}{$m=8$}\\
\hline
	0 & 4.06e-03 & 0.00 & 1.01e-01 & 0.00 & 4.71e-03 & 0.00 & 1.07e-01 & 0.00 & 4.47e-03 & 0.00 & 1.03e-01 & 0.00\\
	1 & 9.16e-04 & 2.15 & 4.87e-02 & 1.06 & 1.14e-03 & 2.04 & 5.09e-02 & 1.08 & 1.09e-03 & 2.04 & 5.23e-02 & 0.98\\
	2 & 2.32e-04 & 1.98 & 2.35e-02 & 1.05 & 4.02e-04 & 1.51 & 2.55e-02 & 1.00 & 3.29e-04 & 1.73 & 2.44e-02 & 1.10\\
	3 & 5.97e-05 & 1.96 & 1.16e-02 & 1.02 & 1.21e-04 & 1.73 & 1.20e-02 & 1.08 & 8.80e-05 & 1.90 & 1.17e-02 & 1.06\\
	4 & 1.50e-05 & 1.99 & 5.78e-03 & 1.01 & 3.20e-05 & 1.92 & 5.84e-03 & 1.04 & 2.24e-05 & 1.97 & 5.80e-03 & 1.02\\
\hline
\noalign{\smallskip}
\end{tabular}
\caption{\label{tab:2d-regular-mesh} Results for a regular triangular mesh. Here, $L$ denotes the (uniform) refinement
level and eoc the estimated order of convergence.}
\end{table}}

\subsection{A quantitative comparison in 3D for a scalar permeability}
As a second test, we consider a non-linear solution similar to the 2D tests in the previous section
\begin{equation*}
u = \frac{x^3y+z^2}{xyz+1},
\end{equation*}
with a parameter dependent scalar coefficient function
	$k(x,y,z;m) = \cos(m\pi xyz)+2$
on the unit-cube $\Omega = (0,1)^3$ discretized by six tetrahedra.
\changed{Here, and in the second 3D example below, the weak right-hand side is computed by interpolating
the right-hand side associated with our manufactured solution into our finite element ansatz space and subsequent
multiplication with the mass matrix. The $L^2$ error is approximated by a discrete version, i.e.~an appropriately
scaled nodal $L^2$ error. While more advanced approaches could be used here, the ones chosen are completely
sufficient for our purpose, which is to demonstrate that our stencil scaling approach behaves analogously to a
classical FE approach.}
We employ a multigrid solver with a V(3,3) cycle on a single compute core and stop after 10 multigrid iterations, i.e.,~$i^\ast = 10$, which is enough to reach the asymptotic regime.
In Tab.~\ref{tab:hhg_nonlinear}, we report convergence of the discretization error 
for the three approaches, i.e.,~classical FE with nodal integration, stencil scaling on volumes and faces, but classical
FE assembly on edges and vertices, and stencil scaling on all primitives.
The refinement is given by $L$ where $L=-2$ denotes the macro mesh. We observe quadratic convergence of the discrete $L^2$ error for all three approaches and a relative tts of about 32\% on level $L=6$.
\begin{table}[!t]
  \centering\footnotesize
  \begin{tabular}{c|c|c|c|c|c|c|c|c|c|c|c|c}
    \hline
    \multicolumn{2}{c|}{} & \multicolumn{3}{c|}{nodal integration} & \multicolumn{4}{c|}{scale Vol+Face} & \multicolumn{4}{c}{scale all} \\
    \hline
    L & DOF & error & eoc & $\rho$ & error & eoc & $\rho$ &
    \rule[-2mm]{0pt}{5.5mm}{\begin{minipage}{5mm}\centering rel.\\[-2pt]tts\end{minipage}}
    & error & eoc & $\rho$ &
    \rule[-2mm]{0pt}{5.5mm}{\begin{minipage}{5mm}\centering rel.\\[-2pt]tts\end{minipage}}\\
    \hline
    \multicolumn{13}{c}{$m=3$}\\
    \hline
    $1$ & 3.43e+02 & 2.46e-03 & --   & 0.07 & 2.42e-03 & --   & 0.06 & 1.13 & 2.51e-03 & --   & 0.07 & 0.70\\
    $2$ & 3.38e+03 & 7.06e-04 & 1.80 & 0.13 & 5.97e-04 & 2.02 & 0.12 & 0.45 & 6.05e-04 & 2.05 & 0.12 & 0.45\\
    $3$ & 2.98e+04 & 1.80e-04 & 1.97 & 0.16 & 1.46e-04 & 2.03 & 0.15 & 0.34 & 1.47e-04 & 2.05 & 0.15 & 0.28\\
    $4$ & 2.50e+05 & 4.46e-05 & 2.01 & 0.18 & 3.59e-05 & 2.02 & 0.15 & 0.30 & 3.59e-05 & 2.03 & 0.15 & 0.29\\
    $5$ & 2.05e+06 & 1.11e-05 & 2.01 & 0.17 & 8.88e-06 & 2.01 & 0.14 & 0.31 & 8.88e-06 & 2.02 & 0.14 & 0.32\\
    $6$ & 1.66e+07 & 2.75e-06 & 2.01 & 0.16 & 2.21e-06 & 2.01 & 0.13 & 0.32 & 2.21e-06 & 2.01 & 0.13 & 0.32\\
    \hline
    \multicolumn{13}{c}{$m=8$}\\
    \hline
    $1$ & 3.43e+02 & 3.17e-03 & --   & 0.07 & 5.43e-03 & --   & 0.09 & 1.33 & 5.21e-03 & --   & 0.08 & 0.70 \\
    $2$ & 3.38e+03 & 1.50e-03 & 1.08 & 0.15 & 1.54e-03 & 1.82 & 0.15 & 0.53 & 1.56e-03 & 1.74 & 0.15 & 0.45 \\
    $3$ & 2.98e+04 & 4.83e-04 & 1.63 & 0.19 & 4.04e-04 & 1.93 & 0.17 & 0.31 & 4.06e-04 & 1.94 & 0.16 & 0.36 \\
    $4$ & 2.50e+05 & 1.31e-04 & 1.89 & 0.19 & 1.01e-04 & 1.99 & 0.16 & 0.30 & 1.02e-04 & 2.00 & 0.17 & 0.29 \\
    $5$ & 2.05e+06 & 3.32e-05 & 1.98 & 0.20 & 2.52e-05 & 2.01 & 0.16 & 0.31 & 2.53e-05 & 2.01 & 0.17 & 0.32 \\
    $6$ & 1.66e+07 & 8.30e-06 & 2.00 & 0.21 & 6.29e-06 & 2.01 & 0.16 & 0.33 & 6.29e-06 & 2.01 & 0.16 & 0.32 \\
    \hline
    \noalign{\smallskip}
  \end{tabular}
  \caption{\label{tab:hhg_nonlinear} 3D results in the case of a scalar coefficient function with errors measured in the discrete $L^2$-norm.}
\end{table}

\subsection{A quantitative comparison in 3D for a permeability tensor}
\label{sub:generalM}
As a third test, we consider a full symmetric and positive definite permeability tensor $K$ with non-linear components. The off-diagonal components are negative or zero:
\begin{equation*}
K = \begin{pmatrix}x^{2} + 2 y^{2} + 3 z^{2} + 1 & - y^{2} & - z^{2}\\- y^{2} & 2 x^{2} + 3 y^{2} + z^{2} + 1 & - x^{2}\\- z^{2} & - x^{2} & 3 x^{2} + y^{2} + 2 z^{2} + 1\end{pmatrix}
\end{equation*}
The manufactured solution is  set to
\begin{equation*}
u = \frac{x^4y+2z}{xyz+1} ,
\end{equation*}
and we consider as domain the unit-cube discretized by twelve tetrahedra. We employ the same multigrid solver as in the previous subsection on a single compute core, but stop the iterations if the residual is reduced by a factor of $10^{-9}$. We denote the final iteration by $i^\ast$.
The results for the classical FE approach and the stencil scaling approach on volumes and faces with classical FE assembly on edges and vertices are reported in Tab.~\ref{tab:divK4}. We observe quadratic convergence of the discrete $L^2$ error for both approaches and a relative tts of 31\% on our finest level $L=6$.
\begin{table}[!b]
  \centering\footnotesize
  \begin{tabular}{c|c|c|c|c|c|c|c|c}
    \hline
    & & \multicolumn{3}{c|}{nodal integration} & \multicolumn{3}{c|}{scale Vol+Face} & rel.  \\
    L & DOF & error & eoc & $\rho$ & error & eoc & $\rho$ & tts \\
    \hline
    $1$ &  8.55e+02  &  1.92e-03  &  --    &  0.07  &  2.21e-03  &  --    &  0.07  &  0.60\\
    $2$ &  7.47e+03  &  4.40e-04  &  2.13  &  0.16  &  5.19e-04  &  2.09  &  0.16  &  0.35\\
    $3$ &  6.26e+04  &  1.06e-04  &  2.05  &  0.24  &  1.27e-04  &  2.03  &  0.24  &  0.25\\
    $4$ &  5.12e+05  &  2.60e-05  &  2.02  &  0.30  &  3.15e-05  &  2.01  &  0.30  &  0.26\\
    $5$ &  4.15e+06  &  6.47e-06  &  2.01  &  0.35  &  7.86e-06  &  2.00  &  0.35  &  0.30\\
    $6$ &  3.34e+07  &  1.61e-06  &  2.00  &  0.37  &  1.96e-06  &  2.00  &  0.37  &  0.31\\
    \hline
    \noalign{\smallskip}
  \end{tabular}
  \caption{\label{tab:divK4} 3D results in the case of a tensorial permeability with discretization errors measured in the discrete $L^2$-norm.}
\end{table}
\changed{Note that, as in Tab.~\ref{tab:hhg_nonlinear}, the relative tts exhibits a slightly
non-monotonic behaviour. This stems from the fact that different types of primitives (e.g.~faces and
volume) have a different tts behaviour for increasing $L$ and profit differently from the scaling
approach. For large $L$, tts is dominated by the work performed on the volume DOFs as is the
relative tts between two approaches. A test with a single macro tetrahedron showed a monotonic
behaviour for the relative tts.}

\changed{\subsection{Memory traffic and roofline analysis}
\label{subsec:roofline}

In Sec.~\ref{subsec:memcost} a theoretic assessment of memory accesses was
presented. In order to verify these results we devised a benchmark to run on a
single compute node of SuperMUC Phase~2. We compare our proposed scaling
approach against the on-the-fly assembly and the stored stencil variant,
described in Sec.~\ref{subsec:memcost}.
Floating-point performance and memory traffic are measured using the Intel
Advisor 2018 tool \cite{intel-advisor}.

We start by giving a brief summary of the SuperMUC Phase 2 hardware details.
Values were taken from \cite{SuperMUC:Configuration:Details}.
One compute node consists of two Haswell Xeon E5-2697 v3 processors clocked at
2.6\,GHz. Each CPU is equipped with 14 physical cores. Each core has a dedicated
L1 (data) cache of size 32\,kB and a dedicated L2 cache of size 256\,kB. The
theoretical bandwidths are 343\,GB/s and 92\,GB/s, respectively. The CPUs
are running in cluster-on-die mode. Thus, each node represents four NUMA
domains each consisting of 7 cores with a separate L3 cache of size 18\,MB and
a theoretical bandwidth of 39\,GB/s.
%
%
Note that the cache bandwidth scales linearly with the number of cores.
Furthermore, each node provides 64\,GB of shared memory with a theoretical
bandwidth of 6.7\,GB/s.

The benchmark computes the residual $r=f-A(k)u$, for a scalar
operator $A(k)$. We only consider volume primitives and their associated DOFs
in the benchmark. The residual computation is iterated 200 times to improve
signal to noise ratio. The program is executed using 28 MPI processes, pinned to the 28 physical cores of a single node. 
This is essential to avoid overly optimistic bandwidth values when only a single core executes memory accesses.
Measurements with the Intel Advisor are carried out solely on rank 0. Moreover, all measurements are
restricted to the inner-most loop, i.e.~where the actual nodal updates take
place, to obtain a clear picture. 
This restriction does not influence the results as the outer loops are
identical in all three variants.
We choose $L=6$ as refinement level, which gives us $2.7\cdot10^6$ DOFs per
MPI rank. The computation involves three scalar fields $u$, $k$, and $f$ which
each require $\sim$22\,MB of storage.
In Fig.~\ref{fig:roofline} (left) we present a roofline analysis, see
\cite{Ilic:2013:CAL,Williams2009}, based on the measurements. The abscissa
shows the arithmetic intensity, i.e.~the number of FLOPs performed divided by
the number of bytes loaded into CPU registers per nodal update. The ordinate
gives the measured performance as FLOPs performed per second. The diagonal
lines give the measured DRAM and cache bandwidths. The values for the caches
are those reported by the Intel Advisor. Naturally these measured values are
smaller than the theoretical ones given in the hardware description above. 
In order to assess the practical
DRAM bandwidth of the complete node we used the LIKWID tool, see
\cite{Treibig:2010}, and performed a \textit{memcopy} benchmark with
non-temporal stores. We found that the system can sustain a bandwidth of about
104\,GB/s, which for perfect load-balancing on the nodes, as is the case in
our benchmark, results in about 3.7\,GB/s per core.
The maximum performance for double precision vectorized fused multiply-add
operations is also reported by the Advisor tool (35.7\,GFLOPs/s).

\begin{figure}\centering
\begin{tabular}{@{}c@{}c@{}}
\includegraphics[width=0.495\textwidth]%
{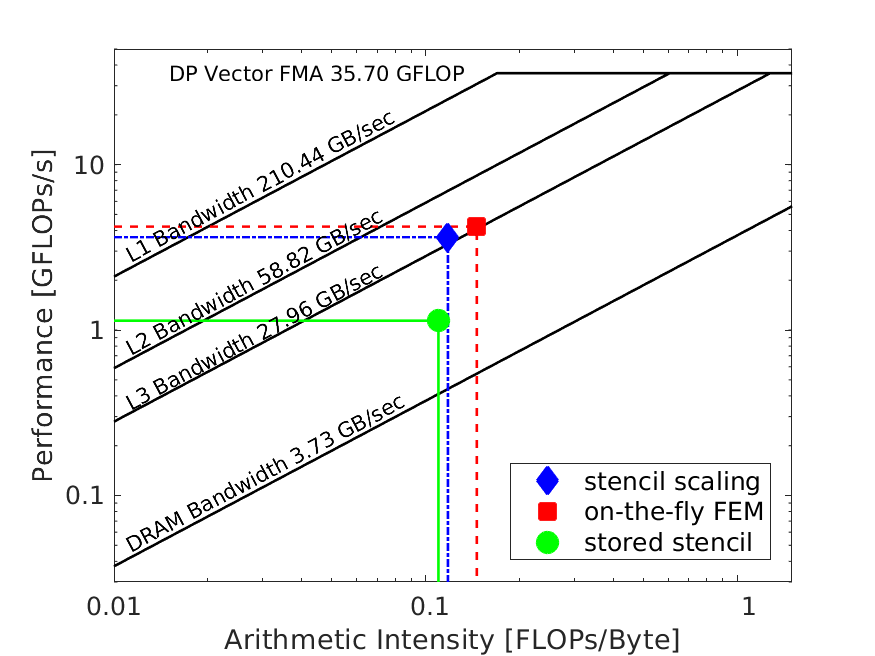} &
\includegraphics[width=0.495\textwidth]%
{pics/perf_updates} \\
\end{tabular}
\caption{\changed{Roofline model (left) and nodal updates per unit time (right) for the
three different approaches.} 
\label{fig:roofline}}
\end{figure} 

For all three kernels we observe a quite similar arithmetic intensity. The
value for the stored stencil variant is close to what one would expect
theoretically. In this case one needs to load 31 values of size 8 bytes (15
$u$ values, 1 $f$ value and 15 stencil weights), write back one residual value of 8 bytes
and perform 30 FLOPs, which results in an intensity of around 0.12 FLOPs/Byte.
Both, the stencil scaling approach and the on-the-fly assembly show a slightly
higher intensity, but the difference seems negligible.

Using FLOPs/s as a performance criterion, we see that the stored stencil
approach achieves significantly smaller values than the two other
approaches,
which perform about a factor of four better. The limiting resource for both,
the on-the-fly assembly and the stencil scaling, is the L3 bandwidth. This
confirms our expectations from Sec.~\ref{subsec:memcost} showing that values
of $k$ are not evicted from L3 cache and can be re-used.

For the stored stencil approach the performance is somewhere in between the
limits given by the L3 and DRAM bandwidths. Here, all stencil values have to be
loaded from main memory, while values of $u$ may still be kept in the cache, 
as with the two other approaches. Due to overlapping of computation and load
operations, the performance is still above the DRAM bandwidth limit, but clearly below
what the L3 bandwidth would allow.

Further assessment requires a more detailed analysis like the
execution-cache-memory model \cite{Hager2012} which goes beyond the scope of
this article.

From the application point of view FLOPs/s is not the most relevant criterion,
however. Of main importance to a user is overall run-time. In this respect the
interesting measure is the number of stencil applications per second, or
equivalently the number of DOFs updated per second (DOFs/s). The latter can
be derived from the FLOPs/s value based on the number of operations required
to (assemble and) apply a local stencil for the three different methods. These
values and the derived DOFs/s are shown in the right part of
Fig.~\ref{fig:roofline}. The number of FLOPs required per update
was computed from the number of performed FLOPs, as reported by the
Intel Advisor, and the number of DOFs inside a volume primitive. Note that
these values match very well our theoretical considerations from
Tab.~\ref{tab:costStencilAssembly}.

As the FLOPs/s value attained by the on-the-fly and the stencil scaling
approaches are almost identical, it is the reduced number of operations
required in the stencil scaling variant, which directly pays off.
The three times lower FLOP count directly translates to a threefold increase in
the DOFs/s and, thus, a similar reduction in run-time.

Furthermore, we observe that our stencil scaling version also gives a higher
number of DOFs/s than the stored stencil approach. While this increase is not
as dramatic compared to the on-the-fly assembly, we emphasize
that the stored stencil approach within HHG does not require additional memory
traffic for information on the matrix' sparsity pattern or involve in-direct
accesses like in a classical CRS format. More importantly, for the largest
simulation with $L=6$, carried out in the Sec.~\ref{subsec:applicationBlending}
below, the stored stencil approach over all levels of the mesh hierarchy would
require about 22\,TB of storage. Together with the scalar fields $u$, $f$ and
the tensor $K$ this would exceed the memory available on SuperMUC. Per core
typically 2.1\,GB are available to an application,
\cite{SuperMUC:Configuration:Details}, which sums up to around
30\,TB for the 14\,310 cores used. The stencil scaling approach on the other
hand only requires to store 120 bytes, i.e.~a single 15-point-stencil per
primitive and level, for the operator construction. This results in less than
165\,MB over all cores for the complete simulation.
This value could even be further reduced to 26\,MB exploiting similarities
of stencils on different levels and the zero-sum property. However, this does
not seem worth the effort.
}

\subsection{Application to a blending setting and large scale results}
\label{subsec:applicationBlending}
To demonstrate the advantages of our novel scaling approach also for a more realistic scenario,
we consider an example using a blending function, as mentioned
in Sec.~\ref{sec:frame}.
To this end, we consider a half cylinder mantle
with inner radius $r_1 = 0.8$ and outer radius $r_2=1.0$, height $z_1 = 4.0$ and
with an angular coordinate between $0$ and $\pi$ as our physical domain $\Omega_{\text{phy}}$.
The cylinder mantle is additionally warped inwards by $w(z) = 0.2\sin\left(z\pi/z_1\right)$ in axial direction.
The mapping $\Phi:\Omega_{\text{phy}} \rightarrow \Omega$ is given by
\begin{equation*}
\Phi(x,y,z) = \begin{pmatrix} \sqrt{x^2+y^2} + w(z) \\
\arccos{\left(\nicefrac{x}{\sqrt{x^2+y^2}}\right)} \\ z
\end{pmatrix}
\end{equation*}
with the reference domain $\Omega = (r_1,r_2) \times (0,\pi) \times (0,z_1)$.
%
%
Using \eqref{eqn:aWithBlendingFunc}, it follows for the mapping tensor $K$
\begin{equation*}
K = \frac{(D \Phi)(D \Phi)^{\top}}{ | \text{det } D \Phi |} = \sqrt{x^2+y^2} \begin{pmatrix}
w'(z)^2 + 1 & 0 & w'(z) \\
0 & \nicefrac{1}{x^2+y^2} & 0 \\
w'(z) & 0 & 1 \end{pmatrix}.
\end{equation*}
Obviously, this tensor is symmetric and positive definite. 
In addition to the geometry blending, we use a variable material parameter
$
a(x,y,z) = 1 + z$.
On the reference domain $\Omega$ this yields the PDE
$-\div a K \nabla u = f$.
As analytic solution on the reference domain we set
\begin{equation*}
u(\xh,\yh,\zh) = \sin\left(\frac{\xh-r_1}{r_2-r_1} \pi \right) \cos\left(4\yh\right) \exp\left(\nicefrac{\zh}{2}\right).
\end{equation*}
The analytic solution mapped to the physical domain is 
illustrated in the right part of Fig.~\ref{fig:cylinder}.
\begin{figure}[!t]
\centering
\includegraphics[trim=215pt 20pt 280pt 80pt, clip=true, width=0.2\textwidth]{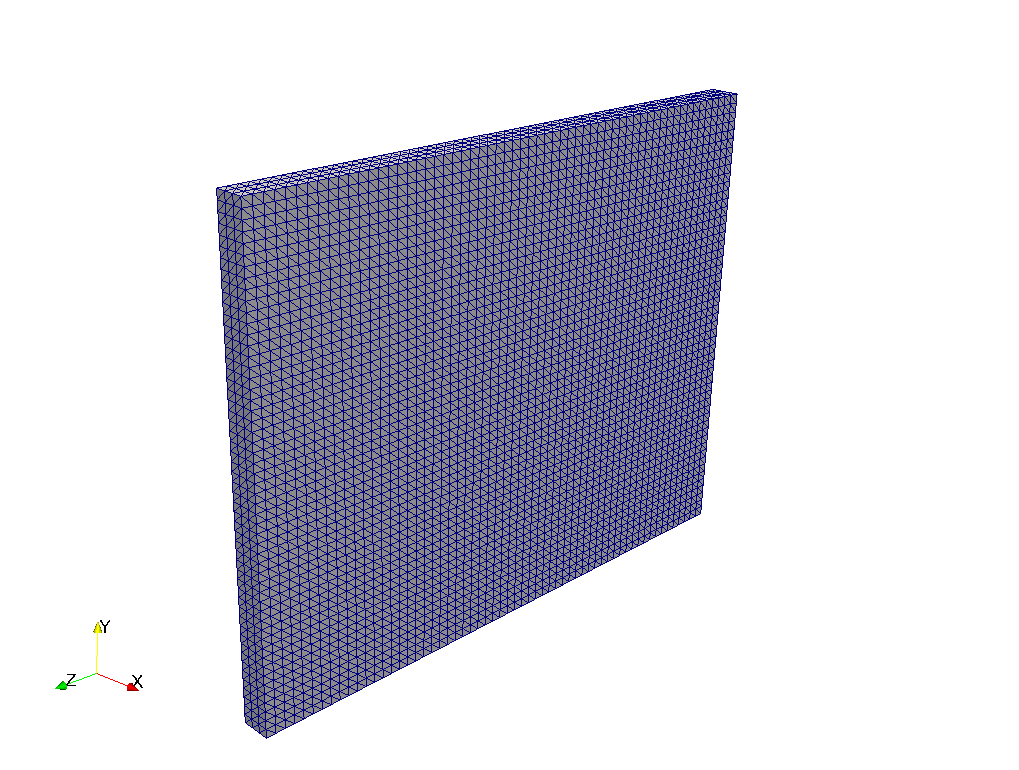}
\hspace*{1.5cm}
\includegraphics[trim=250pt 70pt 260pt 20pt, clip=true, width=0.2\textwidth]{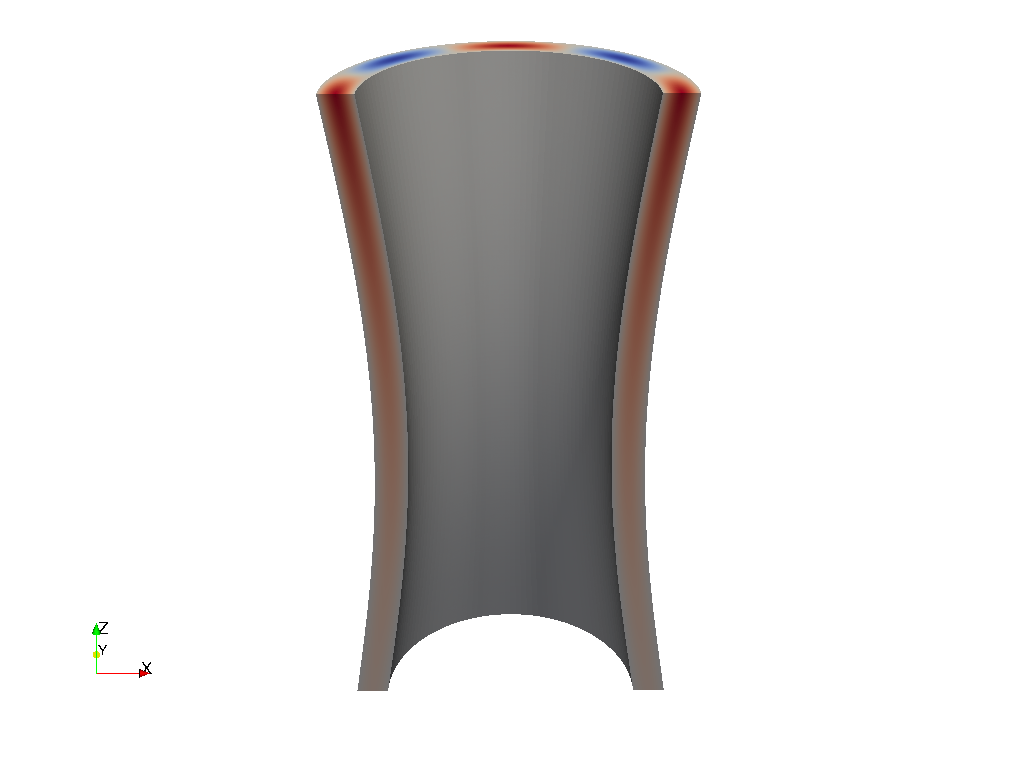}
\caption{Macro mesh of the reference domain $\Omega$ (left) and analytic solution~$u$ (right) mapped to the physical domain $\Omega_{\text{phy}}$.}
\label{fig:cylinder}
\end{figure}

For our numerical experiments, we employ a macro mesh composed of 9540 hexahedral blocks,
where each block is further split into six tetrahedral elements; see Fig.~\ref{fig:cylinder} (left).
The resulting system is solved using 14\,310 compute cores, i.e.,~we assign four macro elements per core. 
For the largest run we have a system with $\mathcal{O}\!\left(10^{11}\right)\!$
DOF.
We employ a multigrid solver with a V(3,3) cycle.
The iteration is stopped when the residual has been reduced by a factor of $10^{-8}$.
In Tab.~\ref{tab:cylinder}, we report the resulting discretization error, the asymptotic multigrid convergence order $\rho$, and the time-to-solution.

\begin{table}[!t]
\centering\footnotesize
\begin{tabular}{*5{c|}r|*3{c|}r|c}
\hline
& & \multicolumn{4}{c|}{nodal integration} & \multicolumn{4}{c|}{scale Vol+Face} & rel. \\
L & DOF & error & eoc & $\rho$ & tts & error & eoc & $\rho$ & tts & tts  \\
\hline
1 & 4.7e+06 & 2.43e-04 &  -   & 0.522 & 2.5    & 2.38e-04 &  -   & 0.522 & 2.0   & 0.80 \\
2 & 3.8e+07 & 6.00e-05 & 2.02 & 0.536 & 4.2    & 5.86e-05 & 2.02 & 0.536 & 2.6   & 0.61 \\
3 & 3.1e+08 & 1.49e-05 & 2.01 & 0.539 & 12.0   & 1.46e-05 & 2.01 & 0.539 & 4.5   & 0.37 \\
4 & 2.5e+09 & 3.72e-06 & 2.00 & 0.538 & 53.9   & 3.63e-06 & 2.00 & 0.538 & 15.3  & 0.28 \\
5 & 2.0e+10 & 9.28e-07 & 2.00 & 0.536 & 307.2  & 9.06e-07 & 2.00 & 0.536 & 88.9  & 0.29 \\
6 & 1.6e+11 & 2.32e-07 & 2.00 & 0.534 & 1822.2 & 2.26e-07 & 2.00 & 0.534 & 589.6 & 0.32 \\
\hline
\noalign{\smallskip}
\end{tabular}
\caption{Results for large scale 3D application with errors measured in the discrete $L^2$-norm. 
   \label{tab:cylinder}}
\end{table}

These results demonstrate that the new scaling approach
maintains the discretization error, as is expected on structured grids 
from our variational crime analysis, 
as well as the multigrid convergence rate.
For small $L$ we observe that the influence of vertex and edge primitives is more pronounced as the improvement in time-to-solution is only small. But for increasing
$L$ this influence decreases and for $L\geqslant4$ the run-time as compared to the nodal integration approach is reduced to about
30\%.

\subsection*{Acknowledgments}
The authors gratefully acknowledge the Gauss Centre for Supercomputing (GCS) 
for providing computing time on the supercomputer SuperMUC at
Leibniz-Rechenzen\-trum (LRZ).

\bibliographystyle{siamplain}
\bibliography{literature}

\end{document}